\newtheorem{thm}{Theorem}[section]
\newtheorem{prop}[thm]{Proposition}
\newtheorem{lem}[thm]{Lemma}
\newtheorem{cor}[thm]{Corollary}
\newtheorem{rems}[thm]{Remarks}
\newtheorem{defi}[thm]{Definition}
\newtheorem{exo}{\bf\large Exercice}
\newcommand{\R}{\mathbb{R}}
\newcommand{\N}{\mathbb{N}}
\newcommand{\dint}{\displaystyle\int}
\newcommand{\Sum}{\displaystyle \sum}
\newcommand{\Int}{\displaystyle \int}
\newcommand{\Frac}{\displaystyle \frac}
\newcommand{\Sup}{\displaystyle \sup}
\newcommand{\Lim}{\displaystyle \lim}
\newcommand{\Liminf}{\displaystyle \liminf}
\newcommand{\Limsup}{\displaystyle \limsup}
\newcommand{\Max}{\displaystyle \max}
\newcommand{\beq}{\begin{eqnarray}}
\newcommand{\eeq}{\end{eqnarray}}
\newcommand{\bq}{\begin{equation}}
\newcommand{\eq}{\end{equation}}
\newcommand{\beqn}{\begin{eqnarray*}}
\newcommand{\eeqn}{\end{eqnarray*}}
\newcommand{\bex}{\begin{exo}}
\newcommand{\eex}{\end{exo}}
\newcommand{\ben}{\begin{enumerate}}
\newcommand{\een}{\end{enumerate}}
\let\de=\delta
\def\cC{{\mathcal C}}
\def\cD{{\mathcal D}}
\def\cP{{\mathcal P}}
\def\eqdefa{\buildrel\hbox{\footnotesize def}\over =}
\author{Ines Ben Ayed}
\address{Universit\'e de Tunis El Manar, Facult\'e des Sciences de Tunis, LR03ES04 \'Equations aux d\'eriv\'ees partielles et applications, 2092 Tunis, Tunisie}
\email{\sl abenyed08@gmail.com}
\author{Mohamed Khalil Zghal}
\address{Universit\'e de Tunis El Manar, Facult\'e des Sciences de Tunis, LR03ES04 \'Equations aux d\'eriv\'ees partielles et applications, 2092 Tunis, Tunisie}
\email{\sl zghal-khalil@hotmail.fr}
\title[Description of the lack of compactness ...]
{Description of the lack of compactness in Orlicz spaces and
applications}
\date{\today}
\begin{document}
%@@@@@@@@@@@@@@@@@@@@@@@@@@@@@@@@@@@@%@@@@@@@@@@@@@@@@@@@@@@@@@@@@@@@@@@@@%@@@@@@@@

\begin{abstract}
In this paper, we investigate the lack of compactness of the Sobolev
embedding of $H^1(\R^2)$ into the Orlicz space $L^{{\phi}_p}(\R^2)$
associated to the function $\phi_p$ defined by
$\phi_p(s):={\rm{e}^{s^2}}-\Sum_{k=0}^{p-1} \frac{s^{2k}}{k!}\cdot$
We also undertake the study of a nonlinear wave equation with
exponential growth  where the Orlicz norm $\|.\|_{L^{\phi_p}}$ plays
a crucial role. This study includes issues of global existence,
scattering and qualitative study.
\end{abstract}

%@@@@@@@@@@@@@@@@@@@@@@@@@@@@@@@@@@@@%@@@@@@@@@@@@@@@@@@@@@@@@@@@@@@@@@@@@%@@@@@@@@@@@@@@

%@@@@@@@@@@@@@@@@@@@@@@@@@@@@@@@@@@@@%@@@@@@@@@@@@@@@@@@@@@@@@@@@@@@@@@@@@%@@@@@@@@

%@@@@@@@@@@@@@@@@@@@@@@@@@@@@@@@@@@@@%@@@@@@@@@@@@@@@@@@@@@@@@@@@@@@@@@@@@%@@@@@@@@@@@@@@
\maketitle

%\tableofcontents

%@@@@@@@@@@@@@@@@@@@@@@@@@@@@@@@@@@@@%@@@@@@@@@@@@@@@@@@@@@@@@@@@@@@@@@@@@%@@@@@@@@@@@@@@@

\section{Introduction}

%@@@@@@@@@@@@@@@@@@@@@@@@@@@@@@@@@@@@%@@@@@@@@@@@@@@@@@@@@@@@@@@@@@@@@@@@@%@@@@@@@@

\subsection{Critical $2D$ Sobolev embedding}
It is well known (see for instance \cite{BCD}) that $H^1(\R^2)$ is
continuously embedded in all Lebesgue spaces $L^q(\R^2)$ for $2\leq
q<\infty$, but not in $L^{\infty}(\R^2 )$. It is also known that
(for more details, we refer the reader to \cite{Orlicz-Book})
\begin{equation}\label{embedding}
H^1(\R^2) \hookrightarrow L^{{\phi}_p}(\R^2),\quad \forall p\in
\N^*,
\end{equation}
where $L^{{\phi}_p}(\R^2)$ denotes the Orlicz space associated to
the function
\begin{equation}\label{phi}
    \displaystyle \phi_p(s)=\rm{e}^{s^2}-\sum_{k=0}^{p-1} \frac{s^{2k}}{k!}\, \cdot
\end{equation}
 The embedding \eqref{embedding} is a direct consequence of the following sharp Trudinger-Moser type inequalities (see \cite {AT, M, Ruf, Tru}):
\begin{prop}
\begin{equation}\label{Mos1}
 \sup_{\|u\|_{H^1}\leq 1}\;\;\Int_{\R^2}\,\left({\rm
e}^{4\pi |u|^2}-1\right)\,dx:=\kappa<\infty,
\end{equation}
\end{prop}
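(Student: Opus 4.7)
My plan is to reduce the statement on the whole plane to the classical Moser inequality on a disk via symmetrization and a truncation argument. First, by Schwarz symmetrization I would replace $u$ by its radial decreasing rearrangement $u^{*}$: this preserves both $\|u\|_{L^{2}}$ and the left-hand integral, while not increasing $\|\nabla u\|_{L^{2}}$ by the P\'olya--Szeg\H{o} inequality. It therefore suffices to prove the bound for radial nonincreasing $u\in H^{1}(\R^{2})$ with $\|u\|_{H^{1}}\leq 1$.

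For such $u$, monotonicity yields the pointwise decay $u(r)^{2}\leq \|u\|_{L^{2}}^{2}/(\pi r^{2})\leq 1/(\pi r^{2})$; in particular the super-level set $\{u\geq 1\}$ is contained in a disk $B_{R}$ with $R\leq 1/\sqrt{\pi}$. I would split
\begin{equation*}
\int_{\R^{2}}(e^{4\pi u^{2}}-1)\,dx \;=\;\int_{\{u<1\}}(e^{4\pi u^{2}}-1)\,dx\,+\,\int_{B_{R}}(e^{4\pi u^{2}}-1)\,dx.
\end{equation*}
On $\{u<1\}$, the convexity bound $e^{4\pi t}-1\leq (e^{4\pi}-1)t$ valid for $t\in[0,1]$, combined with $\|u\|_{L^{2}}\leq 1$, controls the first piece by $e^{4\pi}-1$.

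For the contribution over $B_{R}$ I would introduce the truncation $w:=(u-1)_{+}$, which lies in $H^{1}_{0}(B_{R})$ and satisfies $\|\nabla w\|_{L^{2}}^{2}\leq \|\nabla u\|_{L^{2}}^{2}\leq 1-\|u\|_{L^{2}}^{2}$. The elementary pointwise estimate
\begin{equation*}
u^{2}=(w+1)^{2}\;\leq\;(1+\alpha)w^{2}+\bigl(1+\tfrac{1}{\alpha}\bigr)\quad\text{on }B_{R},
\end{equation*}
together with a choice of $\alpha>0$ making $(1+\alpha)\|\nabla w\|_{L^{2}}^{2}\leq 1$, reduces the estimate to Moser's classical inequality
\begin{equation*}
\int_{B_{R}}e^{4\pi\varphi^{2}}\,dx\;\leq\; C\,|B_{R}|\qquad(\varphi\in H^{1}_{0}(B_{R}),\;\|\nabla\varphi\|_{L^{2}}\leq 1),
\end{equation*}
applied to $\varphi=\sqrt{1+\alpha}\,w/\|\nabla(\sqrt{1+\alpha}w)\|_{L^{2}}$, multiplied by the explicit prefactor $e^{4\pi(1+1/\alpha)}$.

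The main obstacle I anticipate is uniformity of the resulting bound in $u$: both $\alpha$ and $R$ depend on $u$, and the delicate regime is when $\|\nabla u\|_{L^{2}}$ approaches $1$, so that $\|u\|_{L^{2}}$ and $R$ are small, forcing $\alpha\to 0$ and making the prefactor $e^{4\pi(1+1/\alpha)}$ blow up. The shrinking volume $|B_{R}|=\pi R^{2}$ must be traded against this blow-up; a clean way to do so is to isolate the subcritical regime $\|\nabla u\|_{L^{2}}^{2}\leq 1-\delta$, where rescaling $u$ by $\|\nabla u\|_{L^{2}}$ and applying an Adachi--Tanaka-type subcritical Moser inequality produces a bound proportional to $\|u\|_{L^{2}}^{2}$, and to use the truncation argument above only when $\|\nabla u\|_{L^{2}}^{2}>1-\delta$, where $R$ is bounded below in terms of $\delta$ and the prefactor stays uniformly controlled. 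Patching the two regimes delivers the universal bound $\kappa<\infty$.
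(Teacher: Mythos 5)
Your reduction to radial nonincreasing functions, the exterior estimate on $\{u<1\}$, and the idea of splitting into a subcritical and a nearly critical regime are all sound, but the nearly critical regime — which is the whole point of the sharp inequality \eqref{Mos1}, a result the paper itself does not prove but quotes from \cite{Ruf} (its appendix only proves the subcritical Proposition \ref{Mos3}) — does not close as you describe. Truncating at height $1$, you get $w=(u-1)_+\in H^1_0(B_R)$ with only the information $\|\nabla w\|_{L^2}^2\le 1-\|u\|_{L^2}^2$, so the admissible $\alpha$ in $(1+\alpha)\|\nabla w\|_{L^2}^2\le 1$ is forced to satisfy $\alpha\le \|u\|_{L^2}^2/(1-\|u\|_{L^2}^2)$. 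Hence your prefactor is at least ${\rm e}^{4\pi/\alpha}\ge {\rm e}^{c/\|u\|_{L^2}^2}$, while the volume you trade against it is only $|B_R|\le \|u\|_{L^2}^2$; the product $\|u\|_{L^2}^2\,{\rm e}^{c/\|u\|_{L^2}^2}$ blows up as $\|u\|_{L^2}\to 0$, which is exactly what happens along Moser-type concentrating sequences with $\|\nabla u\|_{L^2}\to 1$. Note also that in the regime $\|\nabla u\|_{L^2}^2>1-\delta$ the radius $R$ is bounded \emph{above} by $\sqrt{\delta/\pi}$, not below as you claim, and a lower bound would not help anyway. So the patching step does not yield a bound uniform over the unit ball of $H^1$.

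The missing idea (Ruf's) is to truncate at a fixed \emph{radius} rather than a fixed height: set $v(r)=u(r)-u(r_0)$ for $r\le r_0$ with $r_0$ fixed, so $v\in H^1_0(B_{r_0})$ and $u^2\le(1+\alpha)v^2+(1+1/\alpha)\,u(r_0)^2$ on $B_{r_0}$. The additive constant is now $u(r_0)^2\le \|u\|_{L^2}^2/(\pi r_0^2)$ instead of $1$, and choosing $\alpha=\|u\|_{L^2}^2/(1-\|u\|_{L^2}^2)$ gives $(1+\alpha)\|\nabla v\|_{L^2}^2\le 1$ and $(1+1/\alpha)\,u(r_0)^2\le 1/(\pi r_0^2)$: the smallness of $u(r_0)$ exactly cancels the largeness of $1/\alpha$, so Moser's inequality on $B_{r_0}$ gives a uniform interior bound, while the radial decay estimate handles $|x|>r_0$. (Two smaller points: your subcritical regime via Adachi--Tanaka produces, as usually stated, a bound of the form $C(\delta)\|u\|_{L^2}^2/\|\nabla u\|_{L^2}^2$, so you need a word — e.g.\ a power-series/Gagliardo--Nirenberg argument — when $\|\nabla u\|_{L^2}$ is small; and it is instructive that the paper's appendix proof of Proposition \ref{Mos3} is essentially your height-one truncation in logarithmic variables, which is precisely why it only reaches exponents $\beta<1$, i.e.\ $\alpha<4\pi$, and cannot give \eqref{Mos1}.)
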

\noindent and states as follows:
\begin{equation}
\label{2D-embed}
\|u\|_{L^{\phi_p}}\leq\frac{1}{\sqrt{4\pi}}\|u\|_{H^1},
\end{equation}
where the norm $\|.\|_{L^{{\phi}_p}}$ is given by:
\begin{equation*}
\|u\|_{L^{\phi_p}}=\inf\,\left\{\,\lambda>0,\int_{\R^d}\,\phi_p\left(\frac{|u(x)|}{\lambda}\right)\;dx\leq
\kappa\,\right\}.
\end{equation*}
For our purpose, we shall resort to the following Trudinger-Moser
inequality, the proof of which is postponed in the appendix.
\begin{prop}\label{Mos3}
Let $\alpha\in[0,4\pi[$ and $p$ an integer larger than $1$. There is
a constant $c(\alpha,p)$ such that
\begin{equation}\label{Trudinger-Moser}
\dint_{\R^2}\left({\rm
e}^{\alpha|u(x)|^2}-\Sum_{k=0}^{p-1}\frac{\alpha^k|u(x)|^{2k}}{k!}\right)\,dx\leq
c(\alpha,p)\|u\|_{L^{2p}(\R^2)}^{2p},
\end{equation}
for all $u\in H^1(\R^2)$ satisfying $\|\nabla u\|_{L^2(\R^2)}\leq
1$.
\end{prop}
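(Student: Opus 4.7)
The plan is to Taylor expand the integrand in powers of $|u|^2$ and then estimate each moment $\|u\|_{L^{2k}}^{2k}$ for $k \geq p$ by a Gagliardo--Nirenberg inequality based at $L^{2p}$, finally summing over $k$. By monotone convergence,
$$\int_{\R^2}\!\left(e^{\alpha|u|^2}-\sum_{k=0}^{p-1}\frac{\alpha^k|u|^{2k}}{k!}\right)dx \;=\; \sum_{k=p}^{\infty}\frac{\alpha^k}{k!}\,\|u\|_{L^{2k}(\R^2)}^{2k},$$
so the statement reduces to bounding the resulting series.

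For each $k \geq p$, the scaling-invariant two-dimensional Gagliardo--Nirenberg inequality
$$\|u\|_{L^{2k}(\R^2)}^{2k} \;\leq\; C_k\,\|u\|_{L^{2p}(\R^2)}^{2p}\,\|\nabla u\|_{L^2(\R^2)}^{2(k-p)}$$
has its exponents fixed by homogeneity (the powers $p/k$ and $1-p/k$ are forced by scaling $u\mapsto u(\lambda\,\cdot\,)$). Combining it with the hypothesis $\|\nabla u\|_{L^2}\leq 1$ kills the gradient factor, and factoring $\|u\|_{L^{2p}}^{2p}$ out of the sum collapses the target inequality to
$$\int_{\R^2}\!\left(e^{\alpha|u|^2}-\sum_{k=0}^{p-1}\frac{\alpha^k|u|^{2k}}{k!}\right)dx \;\leq\; \Bigg(\sum_{k=p}^{\infty}\frac{\alpha^k C_k}{k!}\Bigg)\|u\|_{L^{2p}}^{2p},$$
and the constant $c(\alpha,p)$ of the statement is defined to be the bracketed sum.

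The main obstacle is showing that this numerical series converges for every $\alpha<4\pi$, which requires sharp growth of $C_k$: one needs $C_k\lesssim k^{k-p}/(4\pi)^{k-p}$ up to elementary factors, after which Stirling turns the series into something comparable to $\sum (\alpha/4\pi)^k k^{-1/2}$, convergent exactly in the range $\alpha<4\pi$. This sharp GN estimate is essentially equivalent to Proposition~1.1 and is traditionally obtained by Schwarz symmetrization followed by a Moser-type change of variables on the half-line.

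A more economical alternative is to use the elementary pointwise inequality $\sum_{k\geq p} t^k/k!\leq (t^p/p!)e^t$ (a consequence of $(p+j)!\geq p!\,j!$) to collapse the statement to $\int_{\R^2}|u|^{2p}e^{\alpha|u|^2}\,dx\leq c(\alpha,p)\|u\|_{L^{2p}}^{2p}$. Splitting $e^{\alpha|u|^2}=1+(e^{\alpha|u|^2}-1)$, one applies Hölder with $q>1$ satisfying $q\alpha<4\pi$ to the second piece, bounds $\|e^{\alpha|u|^2}-1\|_{L^q}$ via the Adachi--Tanaka form of Proposition~1.1 (using $(e^x-1)^q\leq e^{qx}-1$), and then reabsorbs the leftover factors $\|u\|_{L^{2pq'}}^{2p}\|u\|_{L^2}^{2/q}$ into $\|u\|_{L^{2p}}^{2p}$ by a last GN interpolation. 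Either route pays the same Moser-type price at the critical exponent $4\pi$.
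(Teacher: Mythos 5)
Both of your routes leave the essential estimate unproved, and the second one contains a step that is actually false. In Route 2, after H\"older and the Adachi--Tanaka inequality you must absorb $\|u\|_{L^{2pq'}}^{2p}\,\|u\|_{L^{2}}^{2/q}$ into $\|u\|_{L^{2p}}^{2p}$ using only $\|\nabla u\|_{L^{2}}\leq 1$. No Gagliardo--Nirenberg inequality can do this: a norm $L^{r}$ with $r<2p$ is never controlled by $\|u\|_{L^{2p}}$ and $\|\nabla u\|_{L^{2}}$ (balancing the homogeneity in the amplitude of $u$ would force a negative power of the gradient). Concretely, take $u_{\varepsilon}=\chi+\varepsilon\,\chi\big((\cdot-x_{\varepsilon})/R_{\varepsilon}\big)$ with disjoint supports, $\|\nabla\chi\|_{L^{2}}^{2}\leq 1/2$ and $R_{\varepsilon}=\varepsilon^{-p}$: then $\|\nabla u_{\varepsilon}\|_{L^{2}}\leq 1$, while $\|u_{\varepsilon}\|_{L^{2p}}$ and $\|u_{\varepsilon}\|_{L^{2pq'}}$ stay bounded and $\|u_{\varepsilon}\|_{L^{2}}^{2}\approx\varepsilon^{2-2p}\to\infty$, so the quantity you want to reabsorb blows up although the target right-hand side does not. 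This is precisely the structural point of the proposition: its right-hand side involves only $\|u\|_{L^{2p}}^{2p}$, so any argument routed through Adachi--Tanaka or Ruf (whose outputs are $\|u\|_{L^{2}}^{2}$ or the full $H^{1}$ norm) leaves an $L^{2}$ factor that cannot be recycled; note moreover that the Adachi--Tanaka estimate is not Proposition 1.1 of the paper but essentially the case $p=1$ of the very statement under proof.

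Route 1 is sound up to the series identity, but everything then hinges on the growth of the best constants $C_{k}$ in $\|u\|_{L^{2k}}^{2k}\leq C_{k}\|u\|_{L^{2p}}^{2p}\|\nabla u\|_{L^{2}}^{2(k-p)}$, and that is exactly where the proof is missing. What is needed is $\limsup_{k}(C_{k}/k!)^{1/k}\leq(4\pi)^{-1}$; incidentally the growth you quote, $C_{k}\lesssim k^{k-p}(4\pi)^{-(k-p)}$, is larger than this by essentially a factor ${\rm e}^{k}$ (you dropped the ${\rm e}^{-k}$ from Stirling), and with it the series converges only for $\alpha<4\pi/{\rm e}$. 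More importantly, the required bound is equivalent to the proposition itself, not to Proposition 1.1: expanding Ruf's inequality only yields $\|u\|_{L^{2k}}^{2k}\leq\kappa\,k!\,(4\pi)^{-k}\|u\|_{H^{1}}^{2k}$, a base-$L^{2}$ statement, and the obstruction above again prevents trading $\|u\|_{L^{2}}$ for $\|u\|_{L^{2p}}$. You then defer this key input to ``symmetrization plus a Moser-type change of variables on the half-line'', which is precisely the paper's own proof: symmetrize, set $w(t)=\sqrt{4\pi}\,u({\rm e}^{-t/2})$, split at $T_{0}=\sup\{t:\,w(t)\leq 1\}$, use ${\rm e}^{s}-\sum_{k=0}^{p-1}s^{k}/k!\leq c_{p}\,s^{p}{\rm e}^{s}$ on the region where $w\leq 1$, and on $[T_{0},\infty)$ use $w(t)\leq 1+(t-T_{0})^{1/2}$ together with ${\rm e}^{-T_{0}}\leq\int_{T_{0}}^{\infty}|w(t)|^{2p}{\rm e}^{-t}\,dt$ (valid because $w\geq 1$ there); this last observation is exactly how the $L^{2p}$ norm, rather than the $L^{2}$ norm, enters the right-hand side. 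So, as written, your proposal either relies on a false absorption or postpones the entire difficulty to the argument you were supposed to supply.
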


\subsection{Development on the lack of compactness of Sobolev embedding in the Orlicz space in  the case $p=1$}
In \cite{Bahouri1}, \cite{Bahouri2} and \cite{Bahouri}, H. Bahouri,
M. Majdoub and N. Masmoudi characterized the lack of compactness of
$H^1(\R^2)$ into the Orlicz space $L^{\phi_1}(\R^2)$. To state their
result in a clear way, let us recall some definitions.

\begin{defi}
\label{ortho} We shall designate by a scale any sequence
$(\alpha_n)$ of positive real numbers going to infinity, a core any
sequence $(x_n)$ of points in $\R^2$ and a profile any function
$\psi$ belonging to the set
$$
{\cP}:=\Big\{\;\psi\in L^2(\R,{\rm e}^{-2s}ds);\;\;\; \psi'\in
L^2(\R),\;\psi_{|]-\infty,0]}=0\,\Big\}.
$$
Given two scales $(\alpha_n)$, $(\tilde{\alpha}_n)$, two cores
$(x_n)$, $(\tilde{x}_n)$ and tow profiles $\psi$, $\tilde{\psi}$, we
say that the triplets $\big((\alpha_n),(x_n),\psi\big)$ and
$\big((\tilde{\alpha}_n),(\tilde{x}_n),\tilde{\psi}\big)$ are
orthogonal if
    $$\mbox{either}\quad\quad
   \Big|\log\left(\tilde{\alpha}_n/{\alpha}_n\right)\Big|\to\infty,
    $$
or $  \tilde \alpha_n =  \alpha_n $ and
$$ - \frac{ \log|x_n- \tilde x_n|}{\alpha_n} \longrightarrow a \geq 0  \,\, \mbox{with} \,\,\psi \,\, \mbox{or} \,\, {\tilde \psi}\,\, \mbox{null for} \,\, s < a\,.$$
\end{defi}
\begin{rems}\quad\\
\vspace{-0.5cm}
\begin{itemize}
\item
 The profiles belong to the H\"older space $C^\frac{1}{2}$. Indeed, for any profile $\psi$ and real numbers $s$ and $t$, we have by Cauchy-Schwarz inequality
$$|\psi(s)-\psi(t)|=\left|\int_s^t\psi'(\tau)\;d\tau\right|\leq\|\psi'\|_{L^2(\R)}|s-t|^\frac{1}{2}.$$
\item Note also that (see \cite{Bahouri1})
\begin{equation}\label{prop}
    \frac{\psi(s)}{\sqrt{s}}\rightarrow 0 \quad as \quad s\rightarrow 0 \quad and \quad as \quad s\rightarrow \infty.
\end{equation}
\end{itemize}
\end{rems}
The asymptotically orthogonal decomposition derived in
\cite{Bahouri2} is  formulated in the following terms:
 \begin{thm}
\label{noradmain} Let $(u_n)$ be a bounded sequence in $H^1(\R^2)$
such that \bq \label{noradmain-assum1} u_n\rightharpoonup 0, \eq \bq
\label{noradmain-assum2}
\limsup_{n\to\infty}\|u_n\|_{L^{\phi_1}}=A_0 >0 \quad \quad
\mbox{and}\eq \bq \label{noradmain-assum4} \lim_{R\to\infty}\;
\limsup_{n\to\infty}\,\|u_n\|_{L^{\phi_1} (|x|>R)}=0. \eq Then,
there exist a sequence of scales $({\alpha}_n^{(j)})$, a  sequence
of cores $({x}_n^{(j)})$ and a sequence of profiles $(\psi^{(j)})$
such that the triplets $({\alpha}_n^{(j)}, {x}_n^{(j)},\psi^{(j)})$
are pairwise orthogonal and, up to a subsequence extraction, we have
for all $\ell\geq 1$, \bq \label{noraddecomp}
u_n(x)=\Sum_{j=1}^{\ell}\,\sqrt{\frac{\alpha_n^{(j)}}{2\pi}}\;\psi^{(j)}\left(\frac{-\log|x
- x_n^{(j)}|}{\alpha_n^{(j)}}\right)+{\rm
r}_n^{(\ell)}(x),\quad\limsup_{n\to\infty}\;\|{\rm
r}_n^{(\ell)}\|_{L^{\phi_1}}\stackrel{\ell\to\infty}\longrightarrow
0. \eq Moreover, we have the following stability estimate \bq
\label{ortogonal} \|\nabla
u_n\|_{L^2}^2=\Sum_{j=1}^{\ell}\,\|{\psi^{(j)}}'\|_{L^2}^2+\|\nabla
r_n^{(\ell)}\|_{L^2}^2+\circ(1),\quad n\to\infty. \eq
\end{thm}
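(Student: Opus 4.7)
The plan is to follow the iterated-extraction strategy of \cite{Bahouri1, Bahouri2}, which adapts the profile-decomposition machinery to the critical $2D$ Orlicz setting. The decomposition is built by repeatedly applying a single \emph{extraction lemma}, and an $H^1$-Pythagorean identity is used to force termination of the procedure in the Orlicz norm.

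\textbf{Single extraction.} The central step is to prove the following: for any bounded sequence $(v_n)\subset H^1(\R^2)$ satisfying $v_n\rightharpoonup 0$, the no-escape condition analogous to \eqref{noradmain-assum4}, and $\limsup_n \|v_n\|_{L^{\phi_1}} \geq A > 0$, there exist a scale $\al_n\to\infty$, a core $x_n$ and a profile $\psi\in\cP$ such that, up to extraction,
\bq
\sqrt{\Frac{2\pi}{\al_n}}\; v_n\!\left(x_n + {\rm e}^{-s\al_n}\om\right)\longrightarrow \psi(s)
\eq
in a suitably averaged/weak sense in $(s,\om)$, together with a quantitative lower bound $\|\psi'\|_{L^2} \geq \et(A)>0$. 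Heuristically, $x_n$ is chosen near a maximum of $|v_n|$ and $\al_n$ encodes the logarithmic size of the region of concentration; the extraction rests on the Trudinger--Moser inequality \eqref{Mos1}, which forces any Orlicz mass to live on logarithmic scales.

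\textbf{Iteration, orthogonality and $H^1$-stability.} Applying the extraction lemma to $u_n$ produces $(\al_n^{(1)}, x_n^{(1)}, \psi^{(1)})$; one then sets
\bq
{\rm r}_n^{(1)}(x) := u_n(x) - \sqrt{\Frac{\al_n^{(1)}}{2\pi}}\, \psi^{(1)}\!\left(\Frac{-\log|x - x_n^{(1)}|}{\al_n^{(1)}}\right).
\eq
A change of variables $s = -\log|x - x_n^{(1)}|/\al_n^{(1)}$ combined with the weak convergence produced by the extraction yields the Pythagorean identity
\bq
\|\nabla u_n\|_{L^2}^2 = \|{\psi^{(1)}}'\|_{L^2}^2 + \|\nabla {\rm r}_n^{(1)}\|_{L^2}^2 + \circ(1).
\eq
If $\limsup_n \|{\rm r}_n^{(1)}\|_{L^{\phi_1}} > 0$, I would reapply the extraction lemma to ${\rm r}_n^{(1)}$ to obtain a new triplet; its orthogonality with the first one, in the sense of Definition \ref{ortho}, comes out automatically, since a non-orthogonal pairing would contradict either the defining weak limit of the new profile or the subtraction already performed. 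Iterating $\ell$ times delivers \eqref{noraddecomp} together with \eqref{ortogonal}.

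\textbf{Convergence of the remainder and main obstacle.} The identity \eqref{ortogonal} gives
\bq
\Sum_{j\geq 1} \|{\psi^{(j)}}'\|_{L^2}^2 \leq \sup_n \|\nabla u_n\|_{L^2}^2<\infty,
\eq
so $\|{\psi^{(j)}}'\|_{L^2}\to 0$. Combined with the quantitative bound from the extraction lemma, which reads $\|{\psi^{(j+1)}}'\|_{L^2}\geq \et\!\bigl(\limsup_n \|{\rm r}_n^{(j)}\|_{L^{\phi_1}}\bigr)$ with $\et$ strictly positive on $(0,\infty)$, this forces $\limsup_n \|{\rm r}_n^{(\ell)}\|_{L^{\phi_1}}\to 0$ as $\ell\to\infty$. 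The hardest part of the whole argument will be the extraction lemma itself: pinpointing the pair $(\al_n, x_n)$ from only a lower bound on $\|v_n\|_{L^{\phi_1}}$ and quantitatively controlling $\|\psi'\|_{L^2}$ from below in terms of $A$. This relies on a delicate analysis of the level sets of $v_n$ on logarithmic scales, on the sharpness of the $4\pi$-threshold in \eqref{Mos1}, and on the non-escape assumption \eqref{noradmain-assum4}.
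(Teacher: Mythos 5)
Your outline reproduces the overall architecture of the paper's scheme (extract one triplet with a quantitative bound $\|\psi'\|_{L^2}\geq C\,A$, subtract, use the Pythagorean expansion of $\|\nabla u_n\|_{L^2}^2$ to force $A_\ell\to 0$, iterate), and the termination argument is sound because the lower bound is linear in $A$. But the whole content of the theorem sits inside the extraction lemma, which you leave as a black box, and the heuristic you give for it would not work as stated. In the paper's proof the scale and the core are \emph{not} found together "near a maximum of $|v_n|$": the scales are produced first from the symmetric decreasing rearrangement $u_n^*$, by setting $v_n(s)=u_n^*({\rm e}^{-s})$ and showing that $\sup_{s\geq 0}\big(|v_n(s)/(A_0-\delta)|^2-s\big)\to\infty$ (otherwise dominated convergence plus the radial estimate would give $\limsup_n\|u_n\|_{L^{\phi_1}}\leq A_0-\delta$); this yields $\alpha_n^{(1)}\to\infty$ with $|v_n(\alpha_n^{(1)})|\gtrsim A_0\sqrt{\alpha_n^{(1)}}$, and the profile is the weak limit of $\psi_n(y)=\sqrt{2\pi/\alpha_n^{(1)}}\,v_n(\alpha_n^{(1)}y)$, giving $\|{\psi^{(1)}}'\|_{L^2}\geq C A_0$ (Theorem \ref{theorem-rad} and \eqref{A0}). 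The cores are then located by a separate capacity/measure argument (Lemma \ref{concentration}): if the level set $E_n$ where $|u_n|\geq\sqrt{2\alpha_n^{(1)}}(1-\tfrac{\varepsilon_0}{10})A_0$ were scattered among balls of radius ${\rm e}^{-b\,\alpha_n^{(1)}}$, the Dirichlet energy needed would exceed that of $u_n$. An $H^1$ function need not have a maximum, and the Orlicz mass is carried by level sets of height $\sim\sqrt{\alpha_n}$ whose location is pinned down only by this argument; "choose $x_n$ near a maximum" plus Trudinger--Moser does not substitute for it, nor does it produce the lower bound $\|\psi'\|_{L^2}\geq\eta(A)$.

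Two further steps you declare automatic are genuine steps. First, orthogonality of the extracted triplets in the sense of Definition \ref{ortho} does not follow formally from having subtracted the previous bubble: the delicate case is two extractions sharing the same scale, where one must show the cores separate at the logarithmic rate (or that a profile vanishes below the limit $a$); the failure of the sup formula \eqref{OrliczMax1} for equal scales shows this case cannot be waved away, and the paper devotes a dedicated argument (following \cite{Bahouri2}) to it. Second, to reapply your extraction lemma to ${\rm r}_n^{(\ell)}$ you must verify its hypotheses, in particular compactness at infinity \eqref{noradmain-assum4}: the paper checks that the cores $x_n^{(1)}$ stay bounded and uses $\psi^{(1)}_{|]-\infty,0]}=0$ to get $\|r_n^{(1)}\|_{L^{\phi_1}(|x-x_n^{(1)}|\geq R)}=\|u_n\|_{L^{\phi_1}(|x-x_n^{(1)}|\geq R)}$ (see \eqref{us}), and it also verifies $r_n^{(1)}\rightharpoonup 0$ and the energy identity via the weak convergence of $\partial_y\psi_n$. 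Without these verifications the induction does not close, so as it stands the proposal is an accurate map of the known strategy rather than a proof.
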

\begin{rems}\quad\\
\vspace{-0.5cm}
\begin{itemize}
\item It will be useful later on to point out that for any $q\geq 2$, we have
\begin{equation}\label{lim gnj}
\|g_n^{(j)}\|_{L^q}\stackrel{n\rightarrow\infty}\longrightarrow 0,
\end{equation}
where $g_n^{(j)}$ is the elementary concentration involving in
Decomposition \eqref{noraddecomp} defined by
\begin{equation}\label{def gnj} g_n^{(j)}(x) :=
\sqrt{\frac{\alpha_n^{(j)}}{2\pi}}\;\psi^{(j)}\left(\frac{-\log|x -
x_n^{(j)}|}{\alpha_n^{(j)}}\right).
\end{equation}
Since the Lebesgue measure is invariant under translations, we have
\begin{equation*}
\|g_n^{(j)}\|_{L^q}^q=(2\pi)^{-\frac{q}{2}}(\alpha_n^{(j)})^{\frac{q}{2}}\int_{\R^2}
\bigg|\psi^{(j)}\bigg(-\frac{\log|x|}{\alpha_n^{(j)}}\bigg)\bigg|^{q}
dx.
\end{equation*}
Performing the change of variable
$s=-\frac{\log|x|}{\alpha_n^{(j)}}$, yields
\begin{equation*}
\|g_n^{(j)}\|_{L^q}^q=(2\pi)^{1-\frac{q}{2}}(\alpha_n^{(j)})^{\frac{q}{2}+1}\int^{\infty}_{0}
\big|\psi^{(j)}(s)\big|^{q} {\rm e}^{-2\alpha_n^{(j)}s}\; ds.
\end{equation*}
Fix $\varepsilon > 0$. Then in view of \eqref{prop}, there exist two
real numbers $s_0$ and $S_0$ such that $0<s_0<S_0$ and
\begin{equation*}
    \left|\psi^{(j)}(s)\right|\leq \varepsilon \sqrt{s}, \quad\forall\, s\in [0,s_0]\cup [S_0,\infty[.
\end{equation*}
This implies, by the change of variable $u=\alpha_n^{(j)}s$, that
\begin{eqnarray*}
   (\alpha_n^{(j)})^{\frac{q}{2}+1}\int_0^{s_0} \left|\psi^{(j)}(s)\right|^{q} {\rm e}^{-2\alpha_n^{(j)}s}\; ds & \leq &{\varepsilon}^q  \int_{0}^{\alpha_n^{(j)}s_0} u^{\frac{q}{2}} {\rm e}^{-2u}\;du\\
 & \leq & C_q\,\varepsilon^q.
\end{eqnarray*}
In the same way, we obtain
\begin{eqnarray*}
(\alpha_n^{(j)})^{\frac{q}{2}+1}\int_{S_0}^\infty
\left|\psi^{(j)}(s)\right|^{q} {\rm e}^{-2\alpha_n^{(j)}s}\; ds   &
\leq & C_q\,\varepsilon^q.
\end{eqnarray*}
Finally  taking advantage of  the continuity of $\psi^{(j)}$, we
deduce that \begin{eqnarray*}
    (\alpha_n^{(j)})^{\frac{q}{2}+1}\int_{s_0}^{S_0} \left|\psi^{(j)}(s)\right|^{q} {\rm e}^{-2\alpha_n^{(j)}s}\; ds&\lesssim&  (\alpha_n^{(j)})^{\frac{q}{2}+1}\int_{s_0}^{S_0} {\rm e}^{-2\alpha_n^{(j)}s} \;ds\\
&\lesssim&(\alpha_n^{(j)})^{\frac{q}{2}}\left({\rm
e}^{-2\alpha_n^{(j)}s_0}-{\rm
e}^{-2\alpha_n^{(j)}S_0}\right)\stackrel{n\rightarrow\infty}{\longrightarrow}0,
\end{eqnarray*}
which ends the proof of  the assertion \eqref{lim gnj}.
\item Recall that it was proved in \cite{Bahouri} that
\begin{equation*} \|g_n^{(j)}\|_{L^{\phi_1}}\stackrel{n\rightarrow\infty}\longrightarrow
 \frac{1}{\sqrt{4\pi}}\,\max_{s>0}\;\frac{|\psi^{(j)}(s)|}{\sqrt{s}}\,
\end{equation*}
and
\begin{equation}\label{OrliczMax1} \big\|\Sum_{j=1}^{\ell}\,g_n^{(j)}\big\|_{L^{\phi_1}}\stackrel{n\to\infty}\longrightarrow\sup_{1\leq
j\leq\ell}\,\left(\lim_{n\to\infty}\,\|g_n^{(j)}\|_{L^{\phi_1}}\right)\,
,
\end{equation}in the case when the scales $(\alpha_n^{(j)})_{1 \leq j \leq \ell}$ are pairwise orthogonal.
 Note that Property \eqref{OrliczMax1} does not necessarily remain true in the case when we have the same scales and the pairwise orthogonality of the couples $\big((x^{(j)}_n),\psi^{(j)}\big)$ (see Lemma $3.6$ in \cite{Bahouri}).

\end{itemize}
\end{rems}
\subsection{Study of  the lack of compactness of Sobolev embedding in the Orlicz space in the case $p>1$}
Our first goal  in this paper  is to describe  the lack of
compactness of the Sobolev embedding \eqref{embedding} for $p > 1$.
Our result states as follows:
\begin{thm}
\label{theorem} Let $p>1$ be an integer larger than $1$ and $(u_n)$
be a bounded sequence in $H^1(\R^2)$ such that \bq \label{assum1}
u_n\rightharpoonup 0, \eq \bq \label{assum2}
\limsup_{n\to\infty}\|u_n\|_{L^{\phi_p}}=A_0 >0 \quad \quad
\mbox{and}\eq \bq \label{assum3} \lim_{R\to\infty}\;
\limsup_{n\to\infty}\,\|u_n\|_{L^{\phi_p} (|x|>R)}=0. \eq Then,
there exist a sequence of scales $({\alpha}_n^{(j)})$, a  sequence
of cores $({x}_n^{(j)})$ and a sequence of profiles $(\psi^{(j)})$
such that the triplets $({\alpha}_n^{(j)}, {x}_n^{(j)},\psi^{(j)})$
are pairwise orthogonal in the sense of Definition \ref{ortho} and,
up to a subsequence extraction, we have for all $\ell\geq 1$,
 \bq \label{decomp}
u_n(x)=\Sum_{j=1}^{\ell}\,\sqrt{\frac{\alpha_n^{(j)}}{2\pi}}\;\psi^{(j)}\left(\frac{-\log|x
- x_n^{(j)}|}{\alpha_n^{(j)}}\right)+{\rm r}_n^{(\ell)}(x),\eq
 with $\Limsup_{n\to\infty}\;\|{\rm
r}_n^{(\ell)}\|_{L^{\phi_p}}\stackrel{\ell\to\infty}\longrightarrow
0.$ Moreover, we have the following stability estimate \bq
\label{orto} \|\nabla
u_n\|_{L^2}^2=\Sum_{j=1}^{\ell}\,\|{\psi^{(j)}}'\|_{L^2}^2+\|\nabla
r_n^{(\ell)}\|_{L^2}^2+\circ(1),\quad n\to\infty. \eq
\end{thm}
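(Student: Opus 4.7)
The approach would be to follow the same extraction scheme as in the proof of Theorem \ref{noradmain}, but tracking the Orlicz norm $\|\cdot\|_{L^{\phi_p}}$ throughout, and to identify precisely where the value of $p$ enters. The link between the two situations is furnished by two elementary pointwise comparisons on $\R_+$: $\phi_p(s)\leq\phi_1(s)$ gives $\|\cdot\|_{L^{\phi_p}}\leq\|\cdot\|_{L^{\phi_1}}$, while $\phi_p(s)\geq s^{2p}/p!$ gives $\|u\|_{L^{2p}(\{|x|>R\})}\lesssim\|u\|_{L^{\phi_p}(\{|x|>R\})}$, so that assumption \eqref{assum3} upgrades to tightness of $(u_n)$ in $L^{2p}(\R^2)$. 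Together with the vanishing $\|g_n^{(j)}\|_{L^q}\to 0$ for all $q\geq 2$ recalled in \eqref{lim gnj}, these comparisons show that the $L^{\phi_p}$ and $L^{\phi_1}$ viewpoints coincide both on elementary concentrations and at the qualitative level of ``going to zero''.

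\textbf{First I would} extract the first scale $\alpha_n^{(1)}\to\infty$, core $x_n^{(1)}$ and profile $\psi^{(1)}\in\cP$ by the concentration-point argument of \cite{Bahouri1,Bahouri2}: locate a point of logarithmic concentration of $u_n$ in $L^{\phi_p}$ and take for $\psi^{(1)}$ the weak limit of the suitably rescaled sequence. Setting $g_n^{(1)}$ as in \eqref{def gnj} and $r_n^{(1)}:=u_n-g_n^{(1)}$, the polar change of variable $s=-\log|x-x_n^{(1)}|/\alpha_n^{(1)}$ together with the weak $H^1$-convergence of the rescaled sequence yields the stability identity $\|\nabla u_n\|_{L^2}^2=\|(\psi^{(1)})'\|_{L^2}^2+\|\nabla r_n^{(1)}\|_{L^2}^2+o(1)$. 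Iterating on $r_n^{(1)}$ and enforcing pairwise orthogonality of successive triplets as in Definition \ref{ortho} produces \eqref{decomp} and \eqref{orto} at every level $\ell$. Since $\sum_j\|(\psi^{(j)})'\|_{L^2}^2\leq\limsup_n\|\nabla u_n\|_{L^2}^2<\infty$, one has $\|(\psi^{(\ell)})'\|_{L^2}\to 0$, so for $\ell$ large the remainder has arbitrarily small gradient.

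\textbf{Next} I would show that $\limsup_n\|r_n^{(\ell)}\|_{L^{\phi_p}}\to 0$ as $\ell\to\infty$. Once $\|\nabla r_n^{(\ell)}\|_{L^2}$ is below the Trudinger--Moser threshold, Proposition \ref{Mos3} applied to $r_n^{(\ell)}/\|\nabla r_n^{(\ell)}\|_{L^2}$ gives
$$\int_{\R^2}\phi_p\left(\frac{|r_n^{(\ell)}(x)|}{\lambda}\right)dx\,\leq\,c(\alpha,p)\,\|r_n^{(\ell)}\|_{L^{2p}}^{2p}$$
for $\alpha<4\pi$ and $\lambda$ comparable to $\|\nabla r_n^{(\ell)}\|_{L^2}$. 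The tightness of $(u_n)$ in $L^{2p}$, combined with the $L^{2p}$-vanishing of each $g_n^{(j)}$ guaranteed by \eqref{lim gnj}, yields tightness of $r_n^{(\ell)}$ in $L^{2p}$; together with $r_n^{(\ell)}\rightharpoonup 0$ in $H^1$ and Rellich compactness on bounded sets, this forces $\|r_n^{(\ell)}\|_{L^{2p}}\to 0$ as $n\to\infty$. The displayed integral therefore vanishes and, unpacking the definition of the Orlicz norm, so does $\|r_n^{(\ell)}\|_{L^{\phi_p}}$.

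\textbf{The hard part} is the very first extraction step: one must verify that the concentration-point argument of \cite{Bahouri2}, originally tailored so that $\phi_1$-mass is forced to concentrate at a logarithmic scale, still detects concentration when $\phi_1$ is replaced by $\phi_p$. This reduces to controlling the lower-order Lebesgue pieces $\sum_{k=1}^{p-1}s^{2k}/k!$ separating $\phi_1$ from $\phi_p$; thanks to \eqref{lim gnj} these pieces are asymptotically invisible on elementary concentrations, which is precisely what allows the scheme to go through uniformly in $p$.
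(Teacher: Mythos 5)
There is a genuine gap, and it sits exactly at the point your argument needs most: the convergence of the extraction algorithm. You deduce from $\sum_j\|(\psi^{(j)})'\|_{L^2}^2\leq\limsup_n\|\nabla u_n\|_{L^2}^2<\infty$ that ``for $\ell$ large the remainder has arbitrarily small gradient''. This does not follow from the stability estimate \eqref{orto}: that identity only says the \emph{increments} $\|(\psi^{(j)})'\|_{L^2}^2$ are summable, while $\limsup_n\|\nabla r_n^{(\ell)}\|_{L^2}^2$ decreases to $\limsup_n\|\nabla u_n\|_{L^2}^2-\sum_{j\geq1}\|(\psi^{(j)})'\|_{L^2}^2$, which in general stays strictly positive. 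For instance, if $u_n$ contains, besides a concentrating part, an oscillating piece such as $n^{-1}\chi(x)\sin(nx_1)$, this piece carries gradient energy bounded away from zero but no Orlicz concentration, so it is never removed by any profile and keeps $\|\nabla r_n^{(\ell)}\|_{L^2}$ bounded below for every $\ell$. Moreover, nothing in the hypotheses puts $\|\nabla u_n\|_{L^2}$, let alone $\|\nabla r_n^{(\ell)}\|_{L^2}$, below the Trudinger--Moser threshold $1$. Consequently your ``Next'' step --- applying Proposition \ref{Mos3} to $r_n^{(\ell)}$ with $\lambda$ comparable to $\|\nabla r_n^{(\ell)}\|_{L^2}$ and concluding that $\limsup_n\|r_n^{(\ell)}\|_{L^{\phi_p}}$ is small --- cannot be carried out; the ``small gradient $\Rightarrow$ small Orlicz norm'' route is not available.

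What actually drives the convergence of the scheme (and what the paper proves) is a quantitative lower bound at each step: whenever $A_\ell:=\limsup_n\|r_n^{(\ell)}\|_{L^{\phi_p}}>0$, the newly extracted profile satisfies $\|(\psi^{(\ell+1)})'\|_{L^2}\geq C\,A_\ell$ with a universal constant $C$ (this is \eqref{psi'} and \eqref{estimate}; in the radial case it comes from Proposition \ref{step1} and Corollary \ref{alpha} applied to $v_n(s)=u_n({\rm e}^{-s})$, and in the general case from the rearrangement reduction to Theorem \ref{theorem-rad}, the one-scale truncation, and the capacity Lemma \ref{concentration} which locates the core $x_n^{(1)}$). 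Only with this lower bound in hand does the summability of $\|(\psi^{(j)})'\|_{L^2}^2$ force $A_\ell\to0$; the remainder's Orlicz norm is the quantity measured by the algorithm, not a consequence of gradient smallness. Your preliminary comparisons ($\phi_p\leq\phi_1$, $\phi_p(s)\geq s^{2p}/p!$ giving $L^{2p}$-tightness from \eqref{assum3}, and the vanishing \eqref{lim gnj}) are correct and do capture how $p$ enters --- they are what controls the lower-order terms in the analogue of Proposition \ref{step1} --- but they are no substitute for the missing lower bound, which is the heart of the proof.
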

\begin{rems}\quad\\
\vspace{-0.5cm}
\begin{itemize}
\item Arguing as in \cite{Bahouri}, we can easily prove that
\begin{equation}\label{gnj} \|g_n\|_{L^{\phi_p}}\stackrel{n\rightarrow\infty}\longrightarrow
 \frac{1}{\sqrt{4\pi}}\,\max_{s>0}\;\frac{|\psi(s)|}{\sqrt{s}},
\end{equation}
where
\begin{equation*}
g_n(x) := \sqrt{\frac{\alpha_n}{2\pi}}\;\psi\left(\frac{-\log|x -
x_n|}{\alpha_n}\right)\cdot
\end{equation*}
Indeed setting $L=\Liminf_{n\rightarrow\infty}\|g_n\|_{L^{\phi_p}}$,
we have for fixed $\varepsilon>0$ and $n$ sufficiently large (up to
subsequence extraction)
$$\Int_{\R^2}\Big({\rm e}^{\big|\frac{g_n(x+x_n)}{L+\varepsilon}\big|^2}-\Sum_{k=0}^{p-1}\Frac{|g_n(x+x_n)|^{2k}}{(L+\varepsilon)^{2k}k!}\Big)\,dx\leq \kappa.$$
Therefore,
$$\Int_{\R^2}\Big({\rm e}^{\big|\frac{g_n(x+x_n)}{L+\varepsilon}\big|^2}-1\Big)\,dx\lesssim \kappa+\Sum_{k=1}^{p-1}\|g_n\|_{L^{2k}}^{2k},$$
which implies in view of \eqref{lim gnj} that
$$\Int_{\R^2}\Big({\rm e}^{\big|\frac{g_n(x+x_n)}{L+\varepsilon}\big|^2}-1\Big)\,dx=2\pi\Int_0^{+\infty}\alpha_n{\rm e}^{2\alpha_ns\Big[\frac{1}{4\pi(L+\varepsilon)^2}\big(\frac{\psi(s)}{\sqrt{s}}\big)^2-1\Big]}\,ds-\pi\lesssim 1.$$
Using the fact that $\psi$ is a continuous function, we deduce that
$$L+\varepsilon\geq\frac{1}{\sqrt{4\pi}}\Max_{s>0}\Frac{|\psi(s)|}{\sqrt{s}},$$
which ensures that
$$L\geq\frac{1}{\sqrt{4\pi}}\Max_{s>0}\Frac{|\psi(s)|}{\sqrt{s}}\cdot$$
To end the proof of \eqref{gnj}, it suffices to establish that for
any $\delta>0$
$$\Int_{\R^2}\Big({\rm e}^{\big|\frac{g_n(x+x_n)}{\lambda}\big|^2}-\Sum_{k=0}^{p-1}\Frac{|g_n(x+x_n)|^{2k}}{(\lambda)^{2k}k!}\Big)\,dx\stackrel{n\rightarrow\infty}\longrightarrow 0,$$
where
$\lambda=\frac{1+\delta}{\sqrt{4\pi}}\Max_{s>0}\frac{|\psi(s)|}{\sqrt{s}}\cdot$
Since
$$\Int_{\R^2}\Big({\rm e}^{\big|\frac{g_n(x+x_n)}{\lambda}\big|^2}-\Sum_{k=0}^{p-1}\Frac{|g_n(x+x_n)|^{2k}}{(\lambda)^{2k}k!}\Big)\,dx
\leq\Int_{\R^2}\Big({\rm
e}^{\big|\frac{g_n(x+x_n)}{\lambda}\big|^2}-1\Big)\,dx,$$ the result
derives immediately from Proposition $1.15$ in \cite{Bahouri}, which
achieves the proof of the result.
\item Applying the same lines of reasoning as in  the proof of Proposition 1.19 in \cite{Bahouri}, we obtain the following result:
\begin{prop}
\label{sum} Let
$\big((\alpha_n^{(j)}),(x_n^{(j)}),\psi^{(j)}\big)_{1\leq j
\leq\ell}$ be a family of triplets of scales, cores and profiles
such that the scales are pairwise orthogonal. Then for any integer
$p$ larger than $1$, we have
$$ \Big\|\Sum_{j=1}^{\ell}\,g_n^{(j)}\Big\|_{L^{\phi_p}}\stackrel{n\rightarrow\infty}\longrightarrow \sup_{1\leq
j\leq\ell}\,\left(\lim_{n\to\infty}\,\big\|g_n^{(j)}\big\|_{L^{{\phi}_p}}\right)\;
,
$$
where the functions $g^{(j)}_n$ are defined by \eqref{def gnj}.
\end{prop}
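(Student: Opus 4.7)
The plan is to transport to $\phi_p$ the argument used in the proof of Proposition 1.19 of \cite{Bahouri}, which treats the case $p=1$, relying on the already-established asymptotics \eqref{gnj} for single concentrations. A first observation is that \eqref{gnj} gives
\begin{equation*}
\Lim_{n\to\infty}\|g_n^{(j)}\|_{L^{\phi_p}}=\frac{1}{\sqrt{4\pi}}\Max_{s>0}\Frac{|\psi^{(j)}(s)|}{\sqrt{s}}=\Lim_{n\to\infty}\|g_n^{(j)}\|_{L^{\phi_1}},
\end{equation*}
so the right-hand side of the claimed identity is the same whether computed in $L^{\phi_p}$ or in $L^{\phi_1}$.

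The upper bound then follows from a direct monotonicity argument. Since $\phi_p(s)\leq\phi_1(s)$ for $s\geq 0$ (the difference $\phi_1-\phi_p$ being a polynomial with nonnegative coefficients), the very definition of the Luxemburg norm yields $\|u\|_{L^{\phi_p}}\leq\|u\|_{L^{\phi_1}}$ for every $u$. Combining this with \eqref{OrliczMax1}, applied to the pairwise orthogonal scales $(\alpha_n^{(j)})$, gives
\begin{equation*}
\Limsup_{n\to\infty}\Big\|\Sum_{j=1}^{\ell}g_n^{(j)}\Big\|_{L^{\phi_p}}\leq\Lim_{n\to\infty}\Big\|\Sum_{j=1}^{\ell}g_n^{(j)}\Big\|_{L^{\phi_1}}=\Sup_{1\leq j\leq\ell}\Lim_{n\to\infty}\|g_n^{(j)}\|_{L^{\phi_p}}.
\end{equation*}

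For the matching lower bound, it is enough to show, for each fixed $j_0\in\{1,\ldots,\ell\}$, that $\Liminf_{n\to\infty}\|\sum_j g_n^{(j)}\|_{L^{\phi_p}}\geq\Lim_{n\to\infty}\|g_n^{(j_0)}\|_{L^{\phi_p}}$. This is the point where the orthogonality of scales enters: each $g_n^{(j)}$ is essentially supported on an annulus centered at $x_n^{(j)}$, selected by \eqref{prop}, and the condition $|\log(\alpha_n^{(j)}/\alpha_n^{(k)})|\to\infty$ forces these concentration annuli to be pairwise disjoint for $n$ large. Fixing $\lambda<\Lim_n\|g_n^{(j_0)}\|_{L^{\phi_p}}$, I would show that on the annulus $A_n^{(j_0)}$ associated to $g_n^{(j_0)}$ the sum $\sum_{j\neq j_0}g_n^{(j)}$ tends to zero uniformly as $n\to\infty$; restricting the $\phi_p$-integral to $A_n^{(j_0)}$ then yields
\begin{equation*}
\Int_{\R^2}\phi_p\Big(\Frac{|\sum_j g_n^{(j)}|}{\lambda}\Big)\,dx\geq\Int_{A_n^{(j_0)}}\phi_p\Big(\Frac{|g_n^{(j_0)}|}{\lambda}\Big)\,dx+\circ(1)>\kappa
\end{equation*}
for $n$ large, the strict inequality coming from the choice of $\lambda$. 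Hence $\|\sum_j g_n^{(j)}\|_{L^{\phi_p}}\geq\lambda$, and letting $\lambda\nearrow\Lim_n\|g_n^{(j_0)}\|_{L^{\phi_p}}$ concludes.

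The main obstacle is the quantitative decoupling of the $g_n^{(j)}$ on their respective annuli: one must combine the decay \eqref{prop} of profiles with the orthogonality of scales to show that each $g_n^{(j)}$ carries negligible mass outside $A_n^{(j)}$ and, reciprocally, on $A_n^{(j_0)}$ for $j\neq j_0$. This is the technical core of the analogous argument in \cite{Bahouri}; the passage from $\phi_1$ to $\phi_p$ is then transparent, because the extra polynomial correction $\sum_{k=1}^{p-1}|\cdot|^{2k}/k!$ has vanishing $L^1$-contribution on each concentration by \eqref{lim gnj} and can be absorbed into the error terms.
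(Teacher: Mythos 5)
Your upper bound is correct and is genuinely more economical than the paper's treatment, which simply asserts that the proof of Proposition 1.19 in \cite{Bahouri} carries over: since $\phi_1(s)-\phi_p(s)=\sum_{k=1}^{p-1}s^{2k}/k!\geq 0$ and both Luxemburg norms are defined with the same threshold $\kappa$, one indeed has $\|u\|_{L^{\phi_p}}\leq\|u\|_{L^{\phi_1}}$, and combining \eqref{OrliczMax1} with the $p$-independence of the single-bubble limit (the quoted $\phi_1$ limit together with \eqref{gnj}) gives the $\limsup$ half of the statement with no further analysis. Given the facts already quoted in the paper, that half is complete.

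The lower bound, however, rests on a step that fails as stated. It is not true that $\sum_{j\neq j_0}g_n^{(j)}$ tends to zero uniformly on the concentration annulus $A_n^{(j_0)}$, nor does orthogonality of the scales alone force the annuli to be pairwise disjoint: the proposition makes no assumption whatsoever on the cores, so a small annulus may perfectly well sit inside a large one. For a concrete counterexample to the uniform smallness, take $x_n^{(j)}=x_n^{(j_0)}$, $\alpha_n^{(j)}=(\alpha_n^{(j_0)})^{2}$ (orthogonal scales) and a profile with $\psi^{(j)}(\sigma)=\sigma^{3/4}$ near $\sigma=0$, truncated so as to belong to $\cP$; on $A_n^{(j_0)}$, where $-\log|x-x_n^{(j_0)}|\approx s_0\,\alpha_n^{(j_0)}$ with $s_0$ a maximizer of $|\psi^{(j_0)}(s)|/\sqrt{s}$, one computes $g_n^{(j)}(x)\approx c\,(\alpha_n^{(j_0)})^{1/4}\to\infty$. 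What \eqref{prop} and scale orthogonality actually give, and only away from small neighborhoods of the other cores that may fall inside the annulus (there $g_n^{(j)}$ can even dominate $g_n^{(j_0)}$ pointwise), is the weaker bound $g_n^{(j)}=o\big(\sqrt{\alpha_n^{(j_0)}}\,\big)$, i.e. smallness relative to $g_n^{(j_0)}\approx\sqrt{\alpha_n^{(j_0)}/2\pi}\;\psi^{(j_0)}(s_0)$. This relative smallness is still sufficient, but then the perturbation enters the exponent and produces a multiplicative error ${\rm e}^{o(\alpha_n^{(j_0)})}$ rather than the additive $\circ(1)$ appearing in your displayed inequality, and the exceptional sets around embedded cores must be excised and shown to be harmless. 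Carrying out this quantitative decoupling is precisely the technical content of the proof of Proposition 1.19 in \cite{Bahouri} which the paper invokes; your sketch replaces it by an incorrect uniform statement, so as written the lower bound does not close. (Your disposal of the subtracted polynomial via \eqref{lim gnj} is fine.)
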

\end{itemize}
\end{rems}

 As we will see in Section 2, it turns out that the heart of the matter  in the proof of Theorem \ref{theorem} is reduced to the following result concerning the radial case:
 \begin{thm}
\label{theorem-rad} Let $p$ be an integer strictly larger than $1$
and $(u_n)$ be a bounded sequence in $H_{rad}^1(\R^2)$ such that \bq
\label{assum1-rad} u_n\rightharpoonup 0\quad\quad\mbox{and} \eq
 \bq \label{assum2-rad}
\limsup_{n\to\infty}\|u_n\|_{L^{\phi_p}}=A_0 >0. \eq
 Then, there
exist a sequence of pairwise orthogonal scales $({\alpha}_n^{(j)})$
and a sequence of profiles $(\psi^{(j)})$  such that up to a
subsequence extraction, we have for all $\ell\geq 1$,\bq
\label{dec-rad}
u_n(x)=\Sum_{j=1}^{\ell}\,\sqrt{\frac{\alpha_n^{(j)}}{2\pi}}\;\psi^{(j)}\left(\frac{-\log|x|}{\alpha_n^{(j)}}\right)+{\rm
r}_n^{(\ell)}(x),\quad\limsup_{n\to\infty}\;\|{\rm
r}_n^{(\ell)}\|_{L^{\phi_p}}\stackrel{\ell\to\infty}\longrightarrow
0. \eq Moreover, we have the following stability estimate $$
\|\nabla
u_n\|_{L^2}^2=\Sum_{j=1}^{\ell}\,\|{\psi^{(j)}}'\|_{L^2}^2+\|\nabla
r_n^{(\ell)}\|_{L^2}^2+\circ(1),\quad n\to\infty.$$
\end{thm}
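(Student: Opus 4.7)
The plan is to adapt the iterative extraction of Bahouri-Majdoub-Masmoudi \cite{Bahouri1, Bahouri2}, developed for $L^{\phi_1}$, to the $L^{\phi_p}$-setting with $p > 1$. The argument has three ingredients: a reduction to a one-dimensional problem via a logarithmic change of variable, a single-step extraction producing a nontrivial profile whose amplitude is bounded below by $\limsup_n \|u_n\|_{L^{\phi_p}}$, and an orthogonality-plus-iteration argument forcing the remainders to vanish in $L^{\phi_p}$.

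First, reduce to one dimension. Writing $u_n(x) = U_n(|x|)$ and setting $v_n(s) := \sqrt{2\pi}\, U_n(e^{-s})$, a direct change of variables gives $\|v_n'\|_{L^2(\R)} = \|\nabla u_n\|_{L^2(\R^2)}$, so $(v_n)$ is bounded in $\dot H^1(\R)$, and $v_n(s) \to 0$ as $s \to -\infty$ since radial $H^1$-functions decay at infinity. Cauchy-Schwarz gives $|v_n(s)| \leq \|v_n'\|_{L^2}\sqrt{|s|}$. Moreover, the Strauss compact embedding $H^1_{rad}(\R^2) \hookrightarrow L^q(\R^2)$ for $q > 2$, combined with \eqref{assum1-rad}, gives $u_n \to 0$ in every $L^q$, $q \in (2,\infty)$. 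In the new variables, an elementary concentration \eqref{def gnj} corresponds to $v_n(s) = \sqrt{\alpha_n}\,\psi(s/\alpha_n)$, so profile extraction amounts to finding scales $\alpha_n$ for which renormalizations of $v_n$ have nontrivial weak limits.

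Second, extract the first profile. The key claim is the existence of $\alpha_n^{(1)}\to\infty$ and $\psi^{(1)}\in\mathcal{P}\setminus\{0\}$ such that, up to a subsequence, $\psi_n^{(1)}(s) := (\alpha_n^{(1)})^{-1/2} v_n(\alpha_n^{(1)} s)\rightharpoonup \psi^{(1)}$ in $\dot H^1(\R)$, together with the quantitative lower bound
\[
\frac{1}{\sqrt{4\pi}}\max_{s>0}\frac{|\psi^{(1)}(s)|}{\sqrt{s}} \geq C_0\, A_0
\]
for some absolute $C_0 > 0$. The idea is that for any $\lambda < A_0$, the inequality $\int \phi_p(u_n/\lambda)\,dx > \kappa$ holds for $n$ large; since the polynomial tail $\sum_{k=1}^{p-1} (u_n/\lambda)^{2k}/k!$ integrates to zero thanks to $u_n \to 0$ in every $L^{2k}$, the exponential part $\int (e^{u_n^2/\lambda^2} - 1)\,dx$ stays bounded below, i.e.\ $\limsup_n \|u_n\|_{L^{\phi_1}} \gtrsim A_0$. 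In polar/log coordinates this is exactly the hypothesis under which the Bahouri-Majdoub-Masmoudi $\phi_1$-extraction \cite{Bahouri1} applies to $(v_n)$, yielding the scale and profile with the displayed lower bound.

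Third, iterate and terminate. Set $g_n^{(1)}$ as in \eqref{def gnj} and $r_n^{(1)} := u_n - g_n^{(1)}$. The weak convergence $\psi_n^{(1)}\rightharpoonup\psi^{(1)}$ in $\dot H^1(\R)$, together with $\|g_n^{(1)}\|_{L^q} \to 0$ from \eqref{lim gnj}, yields $r_n^{(1)} \rightharpoonup 0$ in $H^1_{rad}$ and the stability identity $\|\nabla u_n\|_{L^2}^2 = \|(\psi^{(1)})'\|_{L^2}^2 + \|\nabla r_n^{(1)}\|_{L^2}^2 + o(1)$. One repeats the extraction on $r_n^{(1)}$, producing $(\alpha_n^{(2)}, \psi^{(2)})$; orthogonality of the successive scales in the sense of Definition \ref{ortho} (in the radial case, either $|\log(\alpha_n^{(j)}/\alpha_n^{(j')})| \to\infty$ or the profile-disjointness alternative) is enforced by construction, as in \cite{Bahouri1}. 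Iterating gives
\[
\|\nabla u_n\|_{L^2}^2 \;=\; \sum_{j=1}^\ell \|(\psi^{(j)})'\|_{L^2}^2 + \|\nabla r_n^{(\ell)}\|_{L^2}^2 + o(1),
\]
so $\sum_j \|(\psi^{(j)})'\|_{L^2}^2 < \infty$ and hence $\|(\psi^{(j)})'\|_{L^2} \to 0$. Applying the extraction lower bound to $r_n^{(\ell)}$ gives $\max_s |\psi^{(\ell+1)}(s)|/\sqrt{s}\gtrsim \limsup_n \|r_n^{(\ell)}\|_{L^{\phi_p}}$; combining with the Cauchy-Schwarz bound $\max_s |\psi^{(\ell+1)}(s)|/\sqrt{s} \leq \|(\psi^{(\ell+1)})'\|_{L^2}$ (which uses $\psi^{(\ell+1)}(0) = 0$) and the summability above, we deduce $\limsup_n \|r_n^{(\ell)}\|_{L^{\phi_p}} \to 0$ as $\ell \to\infty$.

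The main obstacle is Step 2: converting the Orlicz lower bound $\limsup_n \|u_n\|_{L^{\phi_p}} \geq A_0$ into a genuine lower bound on the profile amplitude $\max_s|\psi^{(1)}(s)|/\sqrt{s}$. One must carefully isolate the exponential contribution to $\int\phi_p(u_n/\lambda)\,dx$ from the polynomial tails $\sum_{k=1}^{p-1}(u_n/\lambda)^{2k}/k!$. The radial compactness $u_n \to 0$ in $L^{2k}$ for $1 \leq k \leq p-1$ is what makes these tails disappear, effectively reducing the $\phi_p$-extraction to the known $\phi_1$-case of \cite{Bahouri1}; once this is established, the remainder of the argument is a direct parallel of \cite{Bahouri1,Bahouri2}.
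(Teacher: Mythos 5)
Your overall architecture (logarithmic change of variable, extraction of a scale and a nonzero profile with amplitude bounded below, energy decoupling and summability of $\|(\psi^{(j)})'\|_{L^2}^2$ to force $A_\ell\to0$) is the same as the paper's, and your Step 3 is essentially the paper's iteration. The genuine gap is in Step 2, i.e. precisely at what you yourself identify as "the main obstacle": the reduction of the $\phi_p$-extraction to the known $\phi_1$-case does not work under the hypotheses of the theorem. First, your justification that "the polynomial tail integrates to zero thanks to $u_n\to0$ in every $L^{2k}$" is false for $k=1$: the Strauss compact embedding $H^1_{rad}(\R^2)\hookrightarrow L^q(\R^2)$ holds only for $q>2$, and $\|u_n\|_{L^2}$ need not vanish (e.g. $u_n(x)=n^{-1}\varphi(x/n)$, which is bounded in $H^1_{rad}$ and converges weakly to $0$). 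Second, and more seriously, the radial $\phi_1$-decomposition of Bahouri--Majdoub--Masmoudi is \emph{not} applicable as a black box here: as the paper points out, that result requires compactness at infinity in $L^{\phi_1}$, a hypothesis which Theorem \ref{theorem-rad} deliberately dispenses with and which can genuinely fail under its assumptions (adding the spreading piece $n^{-1}\varphi(x/n)$ to a concentrating bubble keeps all hypotheses of Theorem \ref{theorem-rad}, since its $L^{\phi_p}$-norm for $p>1$ is comparable to its $L^{2p}$-norm and vanishes, yet it carries a non-vanishing $L^{\phi_1}$-norm escaping to spatial infinity). Because of this, the inequality $\limsup_n\|u_n\|_{L^{\phi_1}}\geq A_0$ (which is anyway trivial, since $\phi_p\leq\phi_1$ pointwise) does not localize any concentration: the $\phi_1$-mass may be carried by a low-amplitude part spreading to infinity, in which case the key detection step of the $\phi_1$-machinery, namely $\sup_{s\geq0}\big(|v_n(s)/(\lambda)|^2-s\big)\to\infty$ for $\lambda$ below the $\phi_1$-limsup, simply fails, and no first profile with amplitude $\gtrsim A_0$ is produced.

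The fix is to run the concentration-detection argument directly on $\phi_p$, which is what the paper does in Proposition \ref{step1}: assuming $\sup_{s\geq0}\big(|v_n(s)/(A_0-\delta)|^2-s\big)$ stays bounded, one splits $\int_{\R^2}\phi_p\big(u_n/(A_0-\delta)\big)\,dx$ into $\{|x|<1\}$, handled by dominated convergence using $\|v_n\|_{L^\infty(]-\infty,M])}\to0$, and $\{|x|\geq1\}$, where $|u_n|$ is uniformly small and one uses the elementary bound $\phi_p(t)\leq C_{M,p}\,t^{2p}$ for $|t|\leq M$ together with $\|u_n\|_{L^{2p}}\to0$ (Strauss, $2p>2$); this contradicts $\limsup_n\|u_n\|_{L^{\phi_p}}=A_0$. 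This is exactly the point where the structure of $\phi_p$ for $p>1$ (lowest-order behavior $t^{2p}$ rather than $t^2$) replaces the compactness-at-infinity hypothesis needed in the $\phi_1$-case, and it is the step your proposal skips. Once this proposition is in hand, the rest of your outline (scale selection as in Corollary \ref{alpha}, weak limit of $\psi_n'$ giving $\|(\psi^{(1)})'\|_{L^2}\geq C A_0$, iteration and termination via the energy identity) coincides with the paper's proof.
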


\eject

\begin{rems}\quad\\
\vspace{-0.5cm}
\begin{itemize}
\item
Compared with the analogous result concerning the Sobolev embedding
of $H_{rad}^1(\R^2)$into $L^{\phi_1}$ established in \cite{Bahouri},
the hypothesis of compactness at infinity is not required. This is
justified by the fact that $H^1_{rad}(\R^2)$ is compactly embedded
in $L^q(\R^2)$ for any $2<q<\infty$ which implies that \bq\label{un}
\Lim_{n\rightarrow\infty}\|u_n\|_{L^{q}(\R^2)}=0,\quad\forall\,2<q<\infty.
\eq
\item In view of  Proposition \ref{sum}, Theorem \ref{theorem-rad} yields to
$$\|u_n\|_{L^{\phi_p}}\to\sup_{j \geq
1}\,\left(\lim_{n\to\infty}\,\|g_n^{(j)}\|_{L^{\phi_p}}\right),$$
which implies that the first profile in Decomposition
\eqref{dec-rad} can be chosen such that up to extraction \bq
\label{A0}
A_0:=\Limsup_{n\rightarrow\infty}\|u_n\|_{L^{\phi_p}}=\Lim_{n\rightarrow\infty}
\left\|\sqrt{\frac{\alpha_n^{(1)}}{2\pi}}\psi^{(1)}\left(-\frac{\log|x|}{\alpha_n^{(1)}}\right)\right\|_{L^{\phi_p}}.
\eq
\end{itemize}
\end{rems}

Note that the description of the lack of compactness in other
critical Sobolev embeddings was  achieved in \cite{BCG, BZ, Ge2} and
has been at the origin of several prospectus.  Among others, one can
mention \cite{BG,BG2, BIP,BC,km}.
\subsection{Layout of the paper}
Our paper is organized as follows: in Section 2, we establish the algorithmic construction of the decomposition stated in Theorem \ref{theorem}. Then, we study in Section 3 a nonlinear two-dimensional wave equation with the exponential nonlinearity $u\,\phi_p(\sqrt{4\pi}u)$. Firstly, we prove the global well-posedness and the scattering in the energy space both in the subcritical and critical cases, and secondly we compare the evolution of this equation with the evolution of the solutions of the free Klein-Gordon equation in the same space.\\
\\
We mention that $C$ will be used to denote a constant which may vary
from line to line. We also use $A \lesssim B$ to denote an estimate
of the form $A \leq CB$ for some absolute constant $C$ and $A
\approx B$ if $A \lesssim B$ and $B \lesssim A$. For simplicity, we
shall also still denote by $(u_n)$ any subsequence of $(u_n)$ and
designate by $\circ(1)$ any sequence which tends to $0$ as $n$ goes
to infinity.

\section{Proof of Theorem \ref{theorem}}
\subsection{Strategy of the proof}
The proof of Theorem \ref{theorem} uses in a crucial way capacity
arguments and is done in three steps: in the first step,  we begin
by the study of $u^\ast_n$ the symmetric decreasing rearrangement of
$u_n$. This led us to establish  Theorem \ref{theorem-rad}. In the
second step, by a technical process developed in \cite{Bahouri2}, we
reduce ourselves  to one scale and extract  the first core
$(x_n^{(1)})$ and the first profile $\psi^{(1)}$ which enables us to
extract the first  element $
\sqrt{\frac{\alpha_n^{(1)}}{2\pi}}\;\psi^{(1)}\left(\frac{-\log|x -
x_n^{(1)}|}{\alpha_n^{(j)}}\right)$. The third step is devoted to
the study of the remainder term.  If the limit of its Orlicz norm is
null we stop the process. If not, we prove that this remainder term
satisfies the same properties as the sequence we  start with  which
allows us to  extract a second  elementary concentration
concentrated around a second core $(x_n^{(2)})$. Thereafter, we
establish the property of orthogonality between the  first  two
elementary concentrations and finally we prove that this process
converges.

 \subsection{Proof of Theorem \ref{theorem-rad}}
The main ingredient in the proof of Theorem \ref{theorem-rad}
consists to extract a scale and a profile $\psi$ such that
\begin{equation}\label{psi'}
\|\psi'\|_{L^2(\R)}\geq C A_0,
\end{equation}
where $C$ is a universal constant. To go to this end, let us for a
bounded sequence $(u_n)$ in $H^1_{rad}(\R^2)$ satisfying the
assumptions \eqref{assum1-rad} and \eqref{assum2-rad}, set $v_n(s) =
u_n({\rm e}^{-s})$. Combining \eqref{un} with the following
well-known radial estimate:
$$|u(r)|\leq \frac{C}{r^\frac{1}{p+1}}\|u\|_{L^{2p}}^\frac{p}{p+1}\|\nabla u\|_{L^2}^\frac{1}{p+1}$$
where $r=|x|$, we infer that
\begin{equation}\label{vn}
\Lim_{n\rightarrow\infty}\|v_n\|_{L^\infty(]-\infty,M])}=0,\quad\forall
M\in\R.
\end{equation}
This gives rise to  the following result:
\begin{prop}
\label{step1} For any $\de>0$, we have
\begin{equation}
\label{depart} \sup_{s\geq
0}\left(\Big|\frac{v_n(s)}{A_0-\de}\Big|^2-s\right)\to\infty,\quad
n\to\infty.
\end{equation}
\end{prop}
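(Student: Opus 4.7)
The plan is to argue by contradiction, using the paper's convention of relabeling subsequences: by \eqref{assum2-rad}, we may assume $\|u_n\|_{L^{\phi_p}}\to A_0$. If \eqref{depart} fails along this $(u_n)$, then after a further extraction there exists $M > 0$ with
\[
|v_n(s)|^2 \leq (A_0-\delta)^2(s + M), \qquad \forall\, s \geq 0,\ \forall\, n.
\]
The aim is to show that, under this pointwise control, $\limsup_n\|u_n\|_{L^{\phi_p}}$ is strictly smaller than $A_0$, contradicting the reduction. Concretely, for any $\lambda > (A_0-\delta)/\sqrt{2}$ I would prove $\int_{\R^2}\phi_p(|u_n|/\lambda)\,dx \to 0$; this forces $\|u_n\|_{L^{\phi_p}}\leq\lambda$ for $n$ large, and letting $\lambda\downarrow(A_0-\delta)/\sqrt{2}<A_0$ gives the desired contradiction.

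To bound this Orlicz integral I split $\R^2$ at $|x|=1$. On $\{|x|>1\}$, \eqref{vn} applied at $M=0$ yields $\|u_n\|_{L^\infty(|x|\geq 1)}=\|v_n\|_{L^\infty(]-\infty,0])}\to 0$, so eventually $\phi_p(|u_n|/\lambda)\lesssim(|u_n|/\lambda)^{2p}$, and the radial compactness \eqref{un} (applicable since $p>1$) gives $\int_{|x|>1}\phi_p(|u_n|/\lambda)\,dx\lesssim\|u_n\|_{L^{2p}}^{2p}\to 0$. On $\{|x|\leq 1\}$, the change of variable $s=-\log|x|$ turns the integral into $2\pi\int_0^\infty\phi_p(|v_n(s)|/\lambda)\,e^{-2s}\,ds$, which I split at a level $K>0$: on $[0,K]$, \eqref{vn} yields $\|v_n\|_{L^\infty([0,K])}\to 0$, which kills the contribution as $n\to\infty$ for each fixed $K$; while on $[K,\infty)$ the contradiction hypothesis gives $|v_n|^2/\lambda^2\leq\beta(s+M)$ with $\beta:=(A_0-\delta)^2/\lambda^2<2$.

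The tail on $[K,\infty)$ is where the main work lies. Using the elementary inequality $\sum_{k\geq p}X^k/k!\leq X^p\,e^X/p!$ with $X=|v_n|^2/\lambda^2$, one bounds $\phi_p(|v_n|/\lambda)\leq\beta^p(s+M)^p e^{\beta(s+M)}/p!$, so that
\[
\int_K^\infty \phi_p(|v_n(s)|/\lambda)\,e^{-2s}\,ds \lesssim \int_K^\infty (s+M)^p\,e^{(\beta-2)s}\,ds,
\]
which is finite (since $\beta<2$) and vanishes as $K\to\infty$. Taking $\limsup_n$ first (so the $[0,K]$ piece disappears) and then letting $K\to\infty$ completes the argument. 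The main obstacle I expect is precisely this tail estimate: the bound has to be sharp enough for the exponential factor $e^{\beta(s+M)}$ to be dominated by $e^{-2s}$, which is what forces $\beta<2$ and hence the range $\lambda>(A_0-\delta)/\sqrt{2}$; fortunately this threshold is already strictly below $A_0$ and suffices to close the contradiction.
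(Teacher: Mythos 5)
Your proof is correct and follows essentially the same route as the paper: argue by contradiction to obtain the pointwise bound $|v_n(s)|^2\leq (A_0-\delta)^2(s+M)$, split at $|x|=1$, use \eqref{vn} together with the compact embedding \eqref{un} (i.e.\ $\|u_n\|_{L^{2p}}\to 0$, valid since $p>1$) and the polynomial bound on $\phi_p$ for bounded arguments on the exterior, and show that the Orlicz integral at a level strictly below $A_0$ tends to zero, contradicting \eqref{assum2-rad}. The only cosmetic difference is that the paper works directly at the level $\lambda=A_0-\delta$ and concludes on $\{|x|<1\}$ by dominated convergence (the pointwise bound yields the integrable dominant ${\rm e}^{C}{\rm e}^{-s}$), whereas you cut at $s=K$ and carry out an explicit tail estimate at a level $\lambda>(A_0-\delta)/\sqrt{2}$ --- a sharper threshold than needed, but the argument closes just as well.
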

\begin{proof}
We proceed by contradiction. If not, there exists $\de>0$ such that,
up to a subsequence  extraction
\begin{equation}
\label{dominate} \sup_{s\geq 0,
n\in\N}\;\;\left(\Big|\frac{v_n(s)}{A_0-\de}\Big|^2-s\right)\leq
C<\infty.
\end{equation}
On the one hand, thanks to \eqref{vn} and \eqref{dominate}, we get
by virtue of Lebesgue theorem
\begin{eqnarray*}
\int_{|x|<1}\;\left({\rm
e}^{|\frac{u_n(x)}{A_0-\de}|^2}-\Sum_{k=0}^{p-1}\frac{|u_n(x)|^{2k}}{(A_0-\delta)^{2k}k!}\right)\,dx&\leq&
\int_{|x|<1}\;\left({\rm
e}^{|\frac{u_n(x)}{A_0-\de}|^2}-1\right)\,dx\\
&\leq&2\pi\,\int_{0}^\infty\;\left({\rm
e}^{|\frac{v_n(s)}{A_0-\de}|^2}-1\right)\,{\rm
e}^{-2s}\,ds\stackrel{n\to\infty}\longrightarrow 0.
\end{eqnarray*}
On the other hand, using Property \eqref{vn} and the simple fact
that for any positive real number $M$, there exists a finite
constant $C_{M,p}$ such that
$$
\sup_{|t|\leq M}\,\left(\frac{{\rm
e}^{t^2}-\sum_{k=0}^{p-1}\frac{t^{2k}}{k!}}{t^{2p}}\right)<C_{M,p},
$$
we deduce in view of  \eqref{un}  that
$$
\int_{|x|\geq 1}\;\left({\rm
e}^{|\frac{u_n(x)}{A_0-\delta}|^2}-\Sum_{k=0}^{p-1}\frac{|u_n(x)|^{2k}}{(A_0-\delta)^{2k}k!}\right)\,dx\lesssim
\|u_n\|_{L^{2p}}^{2p}\to 0\,.
$$
 Consequently,
$$
\Limsup_{n\to\infty}\,\|u_n\|_{L^{\phi_p}}\leq A_0-\delta,
$$
which is in contradiction with Hypothesis \eqref{assum2-rad}.
\end{proof}
An immediate consequence of the previous proposition is the
following corollary whose proof is identical to the proof of
Corollaries 2.4 and  2.5 in \cite{Bahouri}.
\begin{cor}\label{alpha}
 Under the above notations, there exists a sequence $(\alpha_n^{(1)})$ in $\R_+$
tending to infinity such that
\begin{equation}\label{alphan1}
4\,\Big|\frac{v_n(\alpha_n^{(1)})}{A_0}\Big|^2-\alpha_n^{(1)}\stackrel{n\to\infty}\longrightarrow\infty
\end{equation}
and for $n$ sufficiently large, there exists a positive constant $C$
such that \bq \label{bo} \frac{ A_0}{2}\sqrt{\alpha_n^{(1)}}\leq|
v_n(\alpha_n^{(1)})|\leq C \sqrt{\alpha_n^{(1)}}+\circ(1). \eq
\end{cor}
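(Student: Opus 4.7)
The plan is to take $\alpha_n^{(1)}$ to be an approximate maximizer of the quantity appearing in Proposition \ref{step1}, applied with the specific choice $\delta = A_0/2$. That proposition yields
$$M_n := \sup_{s\geq 0}\left(\frac{4|v_n(s)|^2}{A_0^2} - s\right)\longrightarrow\infty,$$
so I can pick $\alpha_n^{(1)}\geq 0$ satisfying $\frac{4|v_n(\alpha_n^{(1)})|^2}{A_0^2} - \alpha_n^{(1)} \geq M_n - 1$, which gives \eqref{alphan1} by construction. Next, to prove $\alpha_n^{(1)}\to\infty$, I argue by contradiction: if $\alpha_n^{(1)}$ stayed bounded by some $M$ along a subsequence, then the vanishing property \eqref{vn} would force $v_n(\alpha_n^{(1)})\to 0$ (since $\|v_n\|_{L^\infty(]-\infty,M])}\to 0$), making the left-hand side of \eqref{alphan1} bounded above, a contradiction. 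This is the step where the compactness $H^1_{rad}\hookrightarrow L^q$ encoded in \eqref{vn} is crucial.

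For the two-sided bound \eqref{bo}, the lower bound is immediate from \eqref{alphan1}: for $n$ large, $\frac{4|v_n(\alpha_n^{(1)})|^2}{A_0^2} \geq \alpha_n^{(1)}$, hence $|v_n(\alpha_n^{(1)})| \geq \tfrac{A_0}{2}\sqrt{\alpha_n^{(1)}}$. For the upper bound, I would exploit the change of variable $s=-\log r$, which gives the clean identity $\|v_n'\|_{L^2(\R)}^2 = \tfrac{1}{2\pi}\|\nabla u_n\|_{L^2(\R^2)}^2$. Combined with the fundamental theorem of calculus and Cauchy--Schwarz on $[0,\alpha_n^{(1)}]$,
$$|v_n(\alpha_n^{(1)})| \;\leq\; |v_n(0)| + \sqrt{\alpha_n^{(1)}}\,\|v_n'\|_{L^2(\R)},$$
and since $(u_n)$ is bounded in $H^1(\R^2)$ the second term is $\leq C\sqrt{\alpha_n^{(1)}}$, while $v_n(0)=u_n(1)=\circ(1)$ by \eqref{vn}.

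The only mild obstacle is ensuring that $\alpha_n^{(1)}$ simultaneously approximately attains the supremum \emph{and} diverges to infinity; both requirements are met by the construction above, but their compatibility genuinely relies on \eqref{vn}, since without it an $H^1$-bounded sequence could concentrate mass at small $s$. Everything else is bookkeeping with elementary inequalities, consistent with the author's remark that the argument is identical to the one carried out for Corollaries~2.4 and~2.5 of \cite{Bahouri}.
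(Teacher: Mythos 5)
Your proof is correct and takes essentially the same route as the one the paper invokes (the proofs of Corollaries 2.4 and 2.5 in the cited Bahouri--Majdoub--Masmoudi paper): apply Proposition \ref{step1} with $\delta=A_0/2$, choose a near-maximizer $\alpha_n^{(1)}$, use \eqref{vn} both to force $\alpha_n^{(1)}\to\infty$ and to get $v_n(0)=\circ(1)$, and obtain the upper bound in \eqref{bo} from $\|v_n'\|_{L^2(\R)}^2=\frac{1}{2\pi}\|\nabla u_n\|_{L^2(\R^2)}^2$ combined with Cauchy--Schwarz on $[0,\alpha_n^{(1)}]$. No substantive gaps.
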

Now, setting
$$\psi_n(y)=\sqrt{\frac{2\pi}{\alpha_n^{(1)}}} v_n(\alpha_n^{(1)} y),$$
we obtain along the same lines as in Lemma 2.6 in \cite{Bahouri} the
following result:
\begin{lem} Under notations of Corollary \ref{alpha}, there exists a profile $\psi^{(1)}\in \mathcal{P}$ such that, up to a subsequence extraction
$$\psi_n'\rightharpoonup (\psi^{(1)})'\; in\; L^2(\R) \quad and \quad \|(\psi^{(1)})'\|_{L^2}\geq\sqrt{\frac{\pi}{2}} A_0.$$
\end{lem}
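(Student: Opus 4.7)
The plan is to first obtain a uniform $L^2(\R)$ bound on $\psi_n'$, extract a weak limit, and then verify the two claimed properties of its primitive $\psi^{(1)}$. Writing $\psi_n(y) = \sqrt{2\pi/\alpha_n^{(1)}}\, u_n(\mathrm{e}^{-\alpha_n^{(1)} y})$ and performing the change of variables $r = \mathrm{e}^{-\alpha_n^{(1)} y}$ inside $\|\psi_n'\|_{L^2(\R)}^2$, I would identify this quantity with the radial Dirichlet energy $\|\nabla u_n\|_{L^2(\R^2)}^2$. Since $(u_n)$ is bounded in $H^1(\R^2)$, Banach--Alaoglu then yields, up to subsequence, a weak limit $\psi_n' \rightharpoonup \phi$ in $L^2(\R)$, and I would set $\psi^{(1)}(y) := \int_0^y \phi(t)\,dt$, so that by construction $(\psi^{(1)})' = \phi \in L^2(\R)$.

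To show $\psi^{(1)} \in \mathcal{P}$, the only non-trivial point is the vanishing on $]-\infty,0]$. Property \eqref{vn} applied with $M=0$ gives $\psi_n(0) = \sqrt{2\pi/\alpha_n^{(1)}}\, v_n(0) \to 0$, and for any fixed $y<0$, the same property yields $\psi_n(y) \to 0$ since $\alpha_n^{(1)} y \to -\infty$. On the other hand, testing the weak convergence $\psi_n' \rightharpoonup \phi$ against $\mathbf{1}_{[y,0]} \in L^2(\R)$ gives $\psi_n(0) - \psi_n(y) \to \int_y^0 \phi(t)\, dt$; combining this with the two pointwise limits forces $\int_y^0 \phi(t)\,dt = 0$ for every $y<0$, hence $\phi \equiv 0$ a.e.\ on $]-\infty,0]$, and $\psi^{(1)}$ vanishes there. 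The membership $\psi^{(1)} \in L^2(\R, \mathrm{e}^{-2s}ds)$ is then automatic from the H\"older bound $|\psi^{(1)}(s)| \leq \|\phi\|_{L^2}\sqrt{s}$ on $[0,\infty)$ combined with the exponential weight at $+\infty$.

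For the quantitative estimate I would evaluate at $y=1$. The lower bound \eqref{bo} in Corollary~\ref{alpha} translates into
$$|\psi_n(1)| = \sqrt{\frac{2\pi}{\alpha_n^{(1)}}}\,|v_n(\alpha_n^{(1)})| \geq \sqrt{\frac{\pi}{2}}\,A_0.$$
Testing $\psi_n' \rightharpoonup \phi$ against $\mathbf{1}_{[0,1]}$ and using $\psi_n(0)\to 0$, I would conclude $\psi_n(1) \to \psi^{(1)}(1)$, so $|\psi^{(1)}(1)| \geq \sqrt{\pi/2}\, A_0$. Cauchy--Schwarz on $[0,1]$ then yields $|\psi^{(1)}(1)| \leq \|(\psi^{(1)})'\|_{L^2(\R)}$, which delivers the desired inequality (up to possibly replacing $\psi^{(1)}$ by $-\psi^{(1)}$, since only the absolute value matters for the profile construction).

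The main obstacle I anticipate is the passage from weak $L^2$-convergence of $\psi_n'$ to genuine pointwise convergence of $\psi_n$ at the selected points $y=0$, $y=1$, and $y<0$, which is indispensable both to place $\psi^{(1)}$ in $\mathcal{P}$ and to convert \eqref{bo} into a lower bound on $\psi^{(1)}(1)$. This transfer is enabled by the uniform decay \eqref{vn}, itself a consequence of the compact radial embedding $H^1_{rad}(\R^2) \hookrightarrow L^{2p}(\R^2)$ combined with the pointwise radial estimate recalled just before \eqref{vn}; this is the only step where the radial symmetry assumption is essentially used, and it is precisely what makes the radial case more favourable than the general situation treated in Theorem~\ref{theorem}.
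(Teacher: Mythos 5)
Your argument is correct and follows essentially the same route the paper takes (via Lemma 2.6 of \cite{Bahouri}): the identity $\|\psi_n'\|_{L^2(\R)}=\|\nabla u_n\|_{L^2(\R^2)}$ giving a weak limit, the uniform decay \eqref{vn} to kill $\psi_n$ on $]-\infty,0]$ and place the primitive in $\mathcal{P}$, and the lower bound \eqref{bo} evaluated at $y=1$ combined with Cauchy--Schwarz to get $\|(\psi^{(1)})'\|_{L^2}\geq\sqrt{\pi/2}\,A_0$. The only cosmetic point is that the final parenthetical about replacing $\psi^{(1)}$ by $-\psi^{(1)}$ is unnecessary (and would spoil the weak convergence); the norm bound already follows from $|\psi^{(1)}(1)|\geq\sqrt{\pi/2}\,A_0$ as you wrote it.
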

To achieve the proof of Theorem \ref{theorem-rad}, let us consider
the remainder term
\begin{equation}\label{def}
r_n^{(1)}(x)=u_n(x)-g_n^{(1)}(x),
\end{equation}
where
$$g_n^{(1)}(x)=\sqrt{\frac{\alpha_n^{(1)}}{2\pi}}\psi^{(1)}\left(\frac{-\log|x|}
{\alpha_n^{(1)}}\right).$$ By straightforward computations, we can
easily prove that $(r^{(1)}_n)$ is bounded in $H^1_{rad}(\R^2)$ and
satisfies the hypothesis \eqref{assum1-rad} together with the
following property:
\begin{equation}\label{r1}
\Lim_{n\rightarrow \infty}\|\nabla
r_n^{(1)}\|_{L^2(\R^2)}^2=\underset{n\rightarrow
\infty}{\lim}\|\nabla
u_n\|_{L^2(\R^2)}^2-\big\|(\psi^{(1)})'\big\|_{L^2(\R)}^2.
\end{equation}
Let us now define
$A_1=\underset{n\rightarrow\infty}{\limsup}\|r_n^{(1)}\|_{L^{\phi_p}}$.
If $A_1=0$, we stop the process. If not, arguing as above, we prove
that there exist a scale $(\alpha_n^{(2)})$ satisfying the statement
of Corollary \ref{alpha} with $A_1$ instead of $A_0$ and a profile
$\psi^{(2)}$ in $\mathcal{P}$ such that
 $$r_n^{(1)}(x)=\sqrt{\frac{\alpha_n^{(2)}}{2\pi}} \psi^{(2)}\left(\frac{-\log|x|}{\alpha_n^{(2)}}\right)+r_n^{(2)}(x),$$
 with $\|(\psi^{(2)})'\|_{L^2}\geq\frac{\sqrt{2\pi}}{2}A_1$ and
\begin{equation*}
\Lim_{n\rightarrow \infty}\|\nabla
r_n^{(2)}\|_{L^2(\R^2)}^2=\underset{n\rightarrow
\infty}{\lim}\|\nabla
r_n^{(1)}\|_{L^2(\R^2)}^2-\big\|(\psi^{(2)})'\big\|_{L^2(\R)}^2.
\end{equation*}
Moreover, as in \cite{Bahouri} we can show that $(\alpha_n^{(1)})$ and $(\alpha_n^{(2)})$ are orthogonal. Finally, iterating the process, we get at step $\ell$
$$
u_n(x)=\Sum_{j=1}^{\ell}\,\sqrt{\frac{\alpha_n^{(j)}}{2\pi}}\;\psi^{(j)}\left(\frac{-\log|x|}{\alpha_n^{(j)}}\right)+{\rm
r}_n^{(\ell)}(x),
$$
with
$$
\limsup_{n\to\infty}\,\|r^{(\ell)}_n\|_{H^1}^2\lesssim
1-A_0^2-A_1^2-\cdots -A_{\ell-1}^2\,,
$$
which implies that $A_\ell\to 0$ as $\ell\to\infty$ and ends the
proof of the theorem.

\subsection{Extraction of the cores and profiles} This step is performed as the proof of Theorem 1.16 in \cite{Bahouri1}.
We sketch it here briefly for the convenience of the reader. Let $u_n^\ast$ be the symmetric decreasing rearrangement of $u_n$. Since $u^\ast_n\in H^1_{rad}(\R^2)$ and satisfies the assumptions of  Theorem \ref{theorem-rad}, we infer that there exist a sequence $(\alpha_n^{(j)})$  of pairwise orthogonal scales and a sequence of profiles $(\varphi^{(j)})$ such that, up to subsequence extraction,
$$
u_n^*(x)=\Sum_{j=1}^{\ell}\,\sqrt{\frac{\alpha_n^{(j)}}{2\pi}}\;\varphi^{(j)}\left(\frac{-\log|x|}{\alpha_n^{(j)}}\right)+{\rm
r}_n^{(\ell)}(x),\quad\limsup_{n\to\infty}\;\|{\rm
r}_n^{(\ell)}\|_{L^{\phi_p}}\stackrel{\ell\to\infty}\longrightarrow
0. $$ Besides, in view of   \eqref{A0}, we can assume that
$$A_0=\underset{n\rightarrow \infty}{\lim} \left\| \sqrt{\frac{\alpha_n^{(1)}}{2\pi}} \varphi^{(1)}\left(-\frac{\log|x|}{\alpha_n^{(1)}}\right)\right\|_{L^{{\Phi}_p}}.$$
Now to extract the cores and profiles, we shall firstly reduce to
the case of one scale according to Section 2.3 in \cite{Bahouri2},
where a   suitable truncation of $u_n$ was introduced.   Then
assuming that $$ u_n^*(x) =
\sqrt{\frac{\alpha_n^{(1)}}{2\pi}}\;\varphi^{(1)}\left(\frac{-\log|x|}{\alpha_n^{(1)}}\right),$$
we apply the strategy developed in Section 2.4 in \cite{Bahouri2} to
extract the cores and the profiles. This approach is based on
capacity arguments: to carry out the extraction process of mass
concentrations, we prove by contradiction that if the mass
responsible for the lack of compactness of the Sobolev embedding in
the Orlicz space is scattered, then the energy used would exceed
that of the starting sequence. This main point can be formulated in
the following terms:
\begin{lem} [ Lemma 2.5 in \cite{Bahouri2}]   \label{concentration}
There exist $\delta_0>0$ and $N_1\in \N$ such that for any $n\geq
N_1$ there exists $x_n$ such that
\begin{equation}\label{concentration0}
   \frac{|E_n\cap B(x_n,\rm{e}^{-b \alpha_n^{(1)}})|}{|E_n|}\geq\delta_0 A_0^2,
\end{equation}
where $E_n:=\{x\in \R^2; |u_n(x)|\geq
\sqrt{2\alpha_n^{(1)}}(1-\frac{\varepsilon_0}{10})A_0\}$ with
$0<\varepsilon_0<\frac{1}{2}$, $B(x_n,\rm{e}^{-b\,\alpha_n^{(1)}})$
designates the ball of center $x_n$ and radius
$\rm{e}^{-b\,\alpha_n^{(1)}}$ with $b=1-2\varepsilon_0$ and $|.|$
denotes the Lebesgue measure.
\end{lem}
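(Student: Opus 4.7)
The plan is to argue by contradiction, exploiting the logarithmic nature of capacity in dimension two: if the super-level set $E_n$ were too dispersed, then each site where $|u_n|$ reaches the order $\sqrt{\alpha_n^{(1)}}$ would cost a fixed amount of Dirichlet energy, and summing these costs over too many disjoint sites would overdraw the uniform $H^1$-budget of the sequence $(u_n)$. Suppose then that \eqref{concentration0} fails for every $x\in\R^2$ along a subsequence, so that for every sufficiently small $\delta_0>0$ and every $x$, $|E_n\cap B(x,{\rm e}^{-b\alpha_n^{(1)}})|<\delta_0 A_0^2|E_n|$.

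First I would apply a Vitali $5r$-covering to $E_n$, selecting a maximal family of pairwise disjoint balls $\{B(x_k,{\rm e}^{-b\alpha_n^{(1)}})\}_{k=1}^{N_n}$ with centers in $E_n$ whose $5$-dilations cover $E_n$. Since in $\R^2$ each $5r$-ball can be covered by a bounded number of $r$-balls, the contradiction hypothesis propagates (up to a harmless constant) to the enlarged balls, and $|E_n|=\sum_k|E_n\cap B(x_k,5{\rm e}^{-b\alpha_n^{(1)}})|\lesssim N_n\,\delta_0 A_0^2|E_n|$ forces $N_n\gtrsim 1/(\delta_0 A_0^2)$.

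Next I would install a uniform lower bound on the Dirichlet energy of $u_n$ over each ball. Fix $k$; by construction $x_k\in E_n$, so $|u_n(x_k)|\geq M_n:=\sqrt{2\alpha_n^{(1)}}(1-\varepsilon_0/10)A_0$. An averaging argument on concentric circles around $x_k$, based on the uniform bound $\|u_n\|_{L^2}\leq C$, produces an intermediate radius $\rho_n^k\in[{\rm e}^{-b\alpha_n^{(1)}},R_0]$, with $R_0$ fixed independently of $n$ and $k$, on which the circular mean of $|u_n|$ is $o(\sqrt{\alpha_n^{(1)}})$. The two-dimensional capacity estimate on the resulting annulus then yields
\begin{equation*}
\int_{\{{\rm e}^{-b\alpha_n^{(1)}}\leq|x-x_k|\leq\rho_n^k\}}|\nabla u_n|^2\,dx\geq\frac{2\pi\bigl(M_n-o(\sqrt{\alpha_n^{(1)}})\bigr)^2}{b\alpha_n^{(1)}+\log\rho_n^k}=\frac{4\pi A_0^2}{b}(1-o(1)).
\end{equation*}
Choosing $R_0$ small enough keeps these annuli with bounded overlap, so summation over $k$ produces $\|\nabla u_n\|_{L^2}^2\gtrsim N_n\cdot\frac{4\pi A_0^2}{b}\gtrsim\frac{4\pi}{b\delta_0}$. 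Taking $\delta_0$ small enough that $4\pi/(b\delta_0)$ exceeds the uniform bound on $\|\nabla u_n\|_{L^2}^2$ yields the contradiction and delivers both $\delta_0$ and, for $n\geq N_1$, the center $x_n$.

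The hard part will be the capacity estimate, since the constant $4\pi A_0^2/b$ is tight: it is dictated by the sharp Trudinger-Moser inequality (which gives $A_0\leq\frac{1}{\sqrt{4\pi}}\|u\|_{H^1}$) and by the choice $b=1-2\varepsilon_0$, leaving no slack. Producing the small-value radius $\rho_n^k$ rigorously, and checking that the enlarging annuli have genuinely controlled overlap so that the per-ball energies are additive rather than merely bounded on average, is where the real technical work concentrates; the prior reduction of $u_n^\ast$ to a single scale via Section~2.3 of \cite{Bahouri2} is essential here, since it pins $|E_n|$ to the scale ${\rm e}^{-2\alpha_n^{(1)}s_0}$ that makes this capacity budget close.
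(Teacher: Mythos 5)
Two remarks before the substance: this paper does not actually reprove the lemma — it is quoted as Lemma 2.5 of \cite{Bahouri2}, the authors only recalling the guiding idea (argue by contradiction: if the mass responsible for the concentration were scattered, the Dirichlet energy required would exceed that of $(u_n)$) — and your outline does follow that same contradiction-plus-covering-plus-capacity strategy. However, your central estimate fails as written. You derive the per-ball lower bound
$\int_{\{{\rm e}^{-b\alpha_n^{(1)}}\leq |x-x_k|\leq \rho_n^k\}}|\nabla u_n|^2\,dx\geq 2\pi\big(M_n-o(\sqrt{\alpha_n^{(1)}})\big)^2/\big(b\alpha_n^{(1)}+\log\rho_n^k\big)$
from the single piece of information $|u_n(x_k)|\geq M_n$ at the center $x_k\in E_n$. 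In dimension two a point has zero capacity: the annulus estimate $\int_A|\nabla u|^2\geq 2\pi\,(\bar u(r)-\bar u(R))^2/\log(R/r)$ involves circular means, and a value at the center gives no lower bound at all — a function equal to $M_n$ on $B(x_k,{\rm e}^{-\alpha_n^2})$ and cut off logarithmically has energy $2\pi M_n^2/\alpha_n^2\approx 4\pi A_0^2/\alpha_n^{(1)}\to 0$, so the displayed inequality simply does not follow from your hypotheses. Any genuine lower bound must use that $u_n\geq M_n$ on a \emph{set of controlled measure}, i.e.\ a lower bound on ${\rm cap}(E_n\cap B_k)$ through $|E_n\cap B_k|$; this is where the reduction to one scale enters quantitatively, because it pins $|E_n|=|E_n^\ast|$ to the scale ${\rm e}^{-2\alpha_n^{(1)}(1+o(1))}$ and, via a pigeonhole over the covering, gives many balls with $|E_n\cap B_k|\gtrsim \delta_0A_0^2|E_n|$. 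You mention this reduction only in your last sentence; in the argument you actually run it plays no role, yet it is exactly the input on which the proof of Lemma 2.5 in \cite{Bahouri2} rests.

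A second step that would fail is the bounded-overlap claim for annuli reaching out to a fixed radius $R_0$. Under the contradiction hypothesis the selected centers may all lie within a region of diameter of order $\sqrt{N_n}\,{\rm e}^{-b\alpha_n^{(1)}}$, which is $\ll R_0$ for large $n$; annuli around the $N_n\gtrsim(\delta_0A_0^2)^{-1}$ centers extending to a fixed $R_0$ then overlap with multiplicity comparable to $N_n$, and the per-ball energies cannot be summed. The outer radii must therefore shrink with $n$, at scales comparable to the inter-center distances, i.e.\ of order ${\rm e}^{-b\alpha_n^{(1)}}$; but at such radii $u_n$ need not be $o(\sqrt{\alpha_n^{(1)}})$ — the rearranged profile already has size about $\sqrt{2b\,\alpha_n^{(1)}}\,A_0$ there — so the available oscillation is only of order $\varepsilon_0\sqrt{\alpha_n^{(1)}}A_0$ and the logarithm in the denominator is of order $\varepsilon_0\alpha_n^{(1)}$, not $b\alpha_n^{(1)}$. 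Hence the per-ball gain is $O(\varepsilon_0A_0^2)$, not $4\pi A_0^2/b$; the contradiction is still reachable after summing over $\gtrsim(\delta_0A_0^2)^{-1}$ balls and taking $\delta_0$ small, but only with this corrected bookkeeping, which is precisely what the parameters $b=1-2\varepsilon_0$ and $1-\varepsilon_0/10$ in the statement are calibrated for and what the argument in \cite{Bahouri2} carries out. As it stands, your proposal identifies the right philosophy but its two quantitative pillars — the per-ball capacity bound and the additivity of the local energies — are both unsupported.
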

Once extracting the first core $(x_n^{(1)})$ making use of the
previous lemma, we  focus on the extraction of the first profile.
For that purpose, we consider the sequence
$$\psi_n(y,\theta)=\sqrt{\frac{2\pi}{\alpha_n^{(1)}}} v_n(\alpha_n^{(1)}y,\theta),$$
where
$v_n(s,\theta)=(\tau_{x_n^{(1)}}u_n)(\rm{e}^{-s}\cos\theta,\rm{e}^{-s}\sin
\theta)$ and $(x_n^{(1)})$ satisfies
$$\frac{|E_n\cap B(x_n,\rm{e}^{-(1-2\varepsilon_0)\alpha_n^{(1)}}|}{|E_n|}\geq\delta_0 A_0^2.$$
Taking advantage of the invariance of Lebesgue measure  under
translations, we deduce that \begin{eqnarray*}\label{property}
  \|\nabla u_n\|_{L^2}^2 &=& \frac{1}{2\pi}\int_{\R}\int_{0}^{2\pi} |\partial_y \psi_n(y,\theta)|^2 dy d\theta \\
   &+& \frac{\alpha_n^{(1)}}{2\pi}\int_{\R}\int_{0}^{2\pi} |\partial_{\theta}\psi_n(y,\theta)|^2 dy d\theta.
\end{eqnarray*}
  Since the scale $\alpha_n^{(1)}$ tends to infinity and the sequence $(u_n)$ is bounded in $H^1(\R^2)$, this implies that up to a subsequence extraction
 $\partial_{\theta}\psi_n\underset{n\rightarrow \infty}{\rightarrow}0$ and $\partial_{y}\psi_n \underset{n\rightarrow \infty}{\rightharpoonup}g$ in $L^2(\R\times[0,2\pi])$, where $g$ only depends on the variable $y$. Thus  introducing the function
  $$\psi^{(1)}(y)=\int_{0}^{y} g(\tau) d\tau,$$
  we obtain along the same lines as in Proposition 2.8 in \cite{Bahouri2} the following result:
  \begin{prop}
  The function $\psi^{(1)}$ belongs to the set of profiles $\mathcal{P}$. Besides   for any $y \in \R$, we have
\begin{equation}
\label{nonradcvprofile} \frac{1}{2\pi} \int^{2\pi}_{0} \psi_n
(y,\theta)\, d\theta \to \psi^{(1)} (y), \end{equation} as~$n$ tends
to infinity and there exists an absolute constant $C$  such that
  \begin{equation}\label{estimate}
    \|{\psi^{(1)}}'\|_{L^2}\geq C \, A_0.
  \end{equation}
  \end{prop}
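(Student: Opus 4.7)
The plan is to extract the three conclusions of the proposition from the weak/strong limits already identified for the sequence $\psi_n$: namely $\partial_\theta \psi_n \to 0$ strongly and $\partial_y \psi_n \rightharpoonup g$ weakly in $L^2(\R \times [0, 2\pi])$, with $g$ independent of $\theta$. First, to show $\psi^{(1)} \in \mathcal{P}$, observe that ${\psi^{(1)}}' = g \in L^2(\R)$ is automatic, and the Cauchy--Schwarz bound $|\psi^{(1)}(s)| \leq \|g\|_{L^2} \sqrt{|s|}$ noted after Definition \ref{ortho} already places $\psi^{(1)}$ in $L^2(\R, e^{-2s}\,ds)$ at the positive end. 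For the support condition $\psi^{(1)}|_{]-\infty, 0]} = 0$, the relevant input is \eqref{vn}: for any fixed $y \leq 0$, the argument $\alpha_n^{(1)} y$ eventually lies in any prescribed half-line $]-\infty, M]$, so $|\psi_n(y, \theta)| \leq \sqrt{2\pi/\alpha_n^{(1)}}\,\|v_n\|_{L^\infty(]-\infty, M])} \to 0$. Consequently the weak limit $g$ vanishes almost everywhere on $]-\infty, 0[$, whence $\psi^{(1)}(y) = \int_0^y g(\tau)\,d\tau = 0$ for $y \leq 0$.

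Second, for the pointwise convergence \eqref{nonradcvprofile} of the angular mean, I would write
$$\frac{1}{2\pi}\int_0^{2\pi} \psi_n(y, \theta)\,d\theta = \frac{1}{2\pi}\int_0^{2\pi}\psi_n(0, \theta)\,d\theta + \int_0^y \frac{1}{2\pi}\int_0^{2\pi} \partial_y \psi_n(\tau, \theta)\,d\theta\,d\tau.$$
The boundary term tends to zero by the same uniform bound used above applied at $y = 0$, while the double integral converges to $\int_0^y g(\tau)\,d\tau = \psi^{(1)}(y)$ by testing the weak convergence $\partial_y \psi_n \rightharpoonup g$ against the characteristic function of $[0, y] \times [0, 2\pi]$ (or of $[y, 0] \times [0, 2\pi]$ with a sign change if $y < 0$).

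The core of the argument, and the hardest step, is the lower bound $\|{\psi^{(1)}}'\|_{L^2} \geq C A_0$. The strategy is to transport the measure-concentration statement of Lemma \ref{concentration} into the $(y, \theta)$ variables. Under the change of variables $x = x_n^{(1)} + (e^{-\alpha_n^{(1)} y}\cos\theta, e^{-\alpha_n^{(1)} y}\sin\theta)$, the inclusion $x \in E_n \cap B(x_n^{(1)}, e^{-b\alpha_n^{(1)}})$ becomes $y \geq b$ together with $|\psi_n(y, \theta)| \geq 2\sqrt{\pi}(1 - \varepsilon_0/10)\,A_0$. Combining the mass estimate \eqref{concentration0} with the $H^1$ bound (which forces $|E_n| \leq C/\alpha_n^{(1)}$) and with the strong convergence $\partial_\theta \psi_n \to 0$ (which prevents $\psi_n$ from varying significantly in $\theta$), one extracts a level $y_0 \in [b, 1]$ at which the angular mean satisfies $\big|\frac{1}{2\pi}\int_0^{2\pi} \psi_n(y_0, \theta)\,d\theta\big| \geq c\,A_0$, for some $c > 0$ depending only on $\delta_0$ and $\varepsilon_0$. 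Passing to the limit via \eqref{nonradcvprofile} and invoking $\psi^{(1)}(0) = 0$ together with Cauchy--Schwarz yields
$$c\,A_0 \leq |\psi^{(1)}(y_0)| = \Big|\int_0^{y_0} {\psi^{(1)}}'(\tau)\,d\tau\Big| \leq \sqrt{y_0}\,\|{\psi^{(1)}}'\|_{L^2} \leq \|{\psi^{(1)}}'\|_{L^2},$$
which is the desired estimate. The delicate point will be the quantitative passage from the two-dimensional measure lower bound in $x$ to a one-dimensional pointwise bound on the angular mean at a single value $y_0$; it is precisely here that the vanishing of $\partial_\theta \psi_n$ in $L^2$ is essential, allowing us to treat $\psi_n(y_0, \cdot)$ as essentially $\theta$-independent.
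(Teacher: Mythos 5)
You are attempting to reconstruct a proof that the paper itself only cites (it refers to Proposition 2.8 of \cite{Bahouri2} ``along the same lines''), and your overall architecture does match that approach: define $\psi^{(1)}$ as the primitive of the weak limit $g$, obtain \eqref{nonradcvprofile} by writing the angular mean as a boundary term plus the integral of $\partial_y\psi_n$ tested against indicators, and derive \eqref{estimate} from Lemma \ref{concentration} transported to the $(y,\theta)$ variables together with Cauchy--Schwarz. However, two of your steps are not justified as written. First, you invoke \eqref{vn} to kill the boundary term at $y=0$ and to get $\psi^{(1)}_{|]-\infty,0]}=0$. But \eqref{vn} is established only for the radial sequence of Theorem \ref{theorem-rad}: its proof uses the radial pointwise estimate and the compact embedding \eqref{un} of $H^1_{rad}$, neither of which applies to the translated, non-radial function $v_n(s,\theta)=(\tau_{x_n^{(1)}}u_n)({\rm e}^{-s}\cos\theta,{\rm e}^{-s}\sin\theta)$; indeed $\|v_n\|_{L^\infty}$ need not even be finite, since $H^1(\R^2)\not\hookrightarrow L^\infty(\R^2)$. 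What is true, and suffices, is a bound on angular means: for $y\le 0$ the circle $|x-x_n^{(1)}|={\rm e}^{-\alpha_n^{(1)}y}$ has radius $R\ge1$, and a scaling-invariant trace estimate gives $\int_0^{2\pi}|u_n(x_n^{(1)}+R(\cos\theta,\sin\theta))|\,d\theta\lesssim \|u_n\|_{H^1}\lesssim 1$ uniformly in $R\ge 1$ and $n$, so the prefactor $\sqrt{2\pi/\alpha_n^{(1)}}$ makes the angular mean of $\psi_n(y,\cdot)$ vanish for all $y\le 0$; this handles the boundary term, and, testing the weak convergence of $\partial_y\psi_n$ against $\theta$-independent functions supported in $\{y<0\}$, it also gives $g=0$ a.e.\ there, hence the support condition.

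The more serious gap is \eqref{estimate}, which is the whole point of the capacity argument and which you only sketch; as sketched it does not close. (a) Lemma \ref{concentration} localizes $E_n\cap B(x_n^{(1)},{\rm e}^{-b\alpha_n^{(1)}})$ only to $\{y\ge b\}$; your claim that one may take $y_0\in[b,1]$ needs an upper bound on the concentration level, and the Chebyshev bound $|E_n|\lesssim 1/\alpha_n^{(1)}$ you invoke goes in the wrong direction --- what is needed is a \emph{lower} bound on $|E_n|$ of order ${\rm e}^{-2(1+o(1))\alpha_n^{(1)}}$, coming from the explicit single-bubble form of $u_n^*$ after the reduction to one scale and from the choice of $\alpha_n^{(1)}$, not from the $H^1$ bound. (b) The strong convergence $\partial_\theta\psi_n\to0$ in $L^2(\R\times[0,2\pi])$ gives no control of the $\theta$-oscillation of $\psi_n$ on a single level $\{y=y_0\}$, which has measure zero; to produce a level at which the angular mean is bounded below by $c\,A_0$ one must average in $y$ over an interval of fixed length (Fubini/pigeonhole) and use the uniform H\"older-$\frac12$ continuity in $y$ of the angular means coming from $\|\partial_y\psi_n\|_{L^2}\lesssim1$. (c) The level produced this way depends on $n$, so one cannot simply apply \eqref{nonradcvprofile} at a fixed $y_0$: one must extract $y_n\to y_0$ in a compact interval and use the equicontinuity of the angular means to pass to the limit. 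These three points constitute precisely the quantitative content of Proposition 2.8 in \cite{Bahouri2} to which the paper appeals; without them your final chain $c\,A_0\le|\psi^{(1)}(y_0)|\le\sqrt{y_0}\,\|{\psi^{(1)}}'\|_{L^2}$ is not established.
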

  \subsection{End of the proof}  To achieve the proof of the theorem, we argue exactly as in Section 2.5 in \cite{Bahouri2} by iterating the process exposed in the previous section. For that purpose, we set
   $$r_n^{(1)}(x)=u_n(x)-g_n^{(1)}(x),$$
  where
$$g_n^{(1)}(x)=\sqrt{\frac{\alpha_n^{(1)}}{2\pi}}\psi^{(1)}\left(-\frac{\log|x-x_n^{(1)}|}{\alpha^{(1)}_n}\right).$$
One can easily check that the sequence $(r_n^{(1)})$ weakly
converges to $0$ in $H^1(\R^2)$. Moreover, since
$\psi^{(1)}_{|]-\infty,0]}=0$, we have for any $R\geq 1$
  \begin{equation}\label{us}\|r_n^{(1)}\|_{L^{\Phi_p}(|x-x_n^{(1)}|\geq R)}=\|u_n\|_{L^{\Phi_p}(|x-x_n^{(1)}|\geq R)}.  \end{equation}
But by assumption, the sequence $(u_n)$  is compact at infinity in
the Orlicz space $L^{\Phi_p}$.  Thus the core $(x_n^{(1)})$ is
bounded in $\R^2$, which ensures  in view of \eqref{us} that
$(r_n^{(1)})$ satisfies the hypothesis of compactness at infinity
\eqref{assum3}.  Finally, taking advantage of the weak convergence
of $(\partial_y \psi_n)$ to ${\psi^{(1)}}'$ in $L^2(y,\theta)$ as
$n$ goes to infinity,  we get
  $$\underset{n\rightarrow\infty}{\lim}\|\nabla r_n^{(1)}\|_{L^2}^2=\underset{n\rightarrow\infty}{\lim}\|\nabla u_n^{(1)}\|_{L^2}^2-\|{\psi^{(1)}}'\|_{L^2}^2.$$
 Now, let us define $A_1:=\underset{n\rightarrow\infty}{\limsup}\|r_n^{(1)}\|_{L^{\Phi_p}}$. If $A_1=0$, we stop the process. If not, knowing that $(r_n^{(1)})$ verifies the assumptions of Theorem \ref{theorem}, we apply the above reasoning, which gives rise to  the existence of a scale $(\alpha_n^{(2)})$, a core $(x^{(2)}_n)$  satisfying the statement of Lemma \ref{concentration} with $A_1$
instead of $A_0$ and a profile $\psi^{(2)}$ in $\mathcal{P}$ such
that
$$r_n^{(1)}(x)=\sqrt{\frac{\alpha_n^{(2)}}{2\pi}}\psi^{(2)}\left(-\frac{\log|x-x_n^{(2)}|}{\alpha^{(2)}_n}\right)+r_n^{(2)}(x),$$
with $\|{\psi^{(2)}}'\|_{L^2}\geq C\,A_1$ and
$$\underset{n\rightarrow\infty}{\lim}\|\nabla r_n^{(2)}\|_{L^2}^2=\underset{n\rightarrow\infty}{\lim}\|\nabla r_n^{(1)}\|_{L^2}^2-\|{\psi^{(2)}}'\|_{L^2}^2.$$
Arguing as in \cite{Bahouri2}, we show that the triplets
$\big(\alpha_n^{(1)},x_n^{(1)},\psi^{(1)}\big)$ and
$\big(\alpha_n^{(2)},x_n^{(2)},\psi^{(2)}\big)$ are orthogonal in
the sense of Definition \ref{ortho} and prove that the process of
extraction of the elementary concentration converges. This ends the
proof of Decomposition \eqref{noraddecomp}. The orthogonality
equality \eqref{ortogonal} derives immediately from Proposition 2.10
in \cite{Bahouri2}. The proof of Theorem \ref{theorem} is then
achieved.

\section{Nonlinear wave equation}
\subsection{Statement of the results}
In this section, we investigate the initial value problem for the
following nonlinear wave equation:
\begin{eqnarray}\label{exp_p}
\left\{\begin{array}{lll}
 \square u +u+u\,\left({\rm e}^{4\pi u^2}-\Sum_{k=0}^{p-1}\frac{(4\pi)^k u^{2k}}{k!}\right)=0,\\\\
 u(0)=u_0\in H^1(\R^2),\quad\partial_tu(0)=u_1\in L^2(\R^2),
\end{array}
\right.
\end{eqnarray}
where $p\geq 1$ is an integer, $u=u(t,x)$ is a real-valued function of $(t,x)\in\R\times \R^2$ and $\square=\partial^2_t-\Delta$ is the wave operator.\\

Let us recall that in \cite{Ibrahim,Ibrahim1}, the authors proved
the global well-posedness for the Cauchy problem \eqref{exp_p} when
$p=1$ and the scattering
when $p=2$ in the subcritical and critical cases (i.e when the energy is less or equal to some threshold). Note also that in \cite{Struwe1, Struwe2}, M. Struwe constructed global smooth solutions to \eqref{exp_p} with smooth data of arbitrary size in the case $p=1$.\\

Formally, the solutions of the Cauchy problem \eqref{exp_p} satisfy
the following conservation law:
\begin{eqnarray}
\label{energy}\quad\quad E_p(u,t)&:=&\|\partial_t
u(t)\|_{L^2}^2+\|\nabla u(t)\|_{L^2}^2+\frac{1}{4\pi} \left\|{\rm
e}^{4\pi
u(t)^2}-1-\Sum_{k=2}^p\frac{(4\pi)^k}{k!}u(t)^{2k}\right\|_{L^1}\\\nonumber
&=&E_p(u,0):=E_p^0.
\end{eqnarray}
This conducts us, as in \cite{Ibrahim}, to define the notion of
criticality in terms of the size of the initial energy $E_p^0$ with
respect to $1$.
\begin{defi}
\label{d1} The Cauchy problem \eqref{exp_p} is said to be
subcritical if
$$
E_p^0<1.
$$
It is said to be critical if $E_p^0=1$ and supercritical if
$E_p^0>1$.
\end{defi}
We shall prove the following result:
\begin{thm}\label{solution}
Assume that $E_p^0\leq 1.$ Then the Cauchy problem \eqref{exp_p} has
a unique global solution $u$ in the space
$$\mathcal{ C}(\R, H^1(\R^2))\cap\mathcal{ C}^1(\R,L^2(\R^2)).$$
 Moreover, $u\in L^4(\R,\mathcal{ C}^{1/4})$ and scatters.
\end{thm}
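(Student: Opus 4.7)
The plan is to follow the strategy of Ibrahim--Majdoub--Masmoudi \cite{Ibrahim,Ibrahim1}, who treated the cases $p=1,2$, and to extend it to arbitrary $p\geq 1$ by combining the refined Trudinger--Moser inequality of Proposition \ref{Mos3} with the profile decomposition of Theorem \ref{theorem}. The argument splits into four stages: local well-posedness by fixed point; global existence via energy conservation; a global a priori bound on $\|u\|_{L^4(\R,\mathcal{C}^{1/4})}$; and scattering by the Duhamel formula.

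For the local theory I would set up a contraction in $Y_T:=\mathcal{C}([0,T],H^1(\R^2))\cap \mathcal{C}^1([0,T],L^2(\R^2))\cap L^4([0,T],\mathcal{C}^{1/4})$ using the standard Strichartz estimate for $\square+1$ in two space dimensions. Writing the nonlinearity as $u\sum_{k\geq p}(4\pi)^k u^{2k}/k!$ and splitting by H\"older as an $L^\infty_x$ factor (absorbed by the $\mathcal{C}^{1/4}$ norm, at the price of a small exponent in $t$) times an $L^2_x$ exponential factor (controlled by Moser--Trudinger on the unit $H^1$ ball), one bounds $\|F(u)\|_{L^1_T L^2_x}$ by a quantity that is small for small $T$, and a contraction follows. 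Global existence is then immediate, since the identity
$$\mathrm{e}^{4\pi u^2}-1-\sum_{k=2}^{p}\frac{(4\pi)^k u^{2k}}{k!}=4\pi u^2+\sum_{k\geq p+1}\frac{(4\pi)^k u^{2k}}{k!}$$
shows that the $L^2$ norm of $u$ sits inside the energy, whence $\|u(t)\|_{H^1}^2+\|\partial_t u(t)\|_{L^2}^2\leq E_p^0\leq 1$ for every $t$; since the local existence time depends only on this energy, iteration gives a solution on all of $\R$.

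In the subcritical regime $E_p^0<1$, the strict inequality $\|\nabla u(t)\|_{L^2}<1$ lets us apply Proposition \ref{Mos3} with $\alpha=4\pi\|\nabla u(t)\|_{L^2}^2<4\pi$ after rescaling, which replaces the exponential nonlinear term by the Lebesgue quantity $\|u(t)\|_{L^{2p}}^{2p}$. A standard bootstrap on the Strichartz norm then closes and yields $\|u\|_{L^4(\R,\mathcal{C}^{1/4})}<\infty$, from which scattering follows by the Duhamel representation: the pair $(u(t)-u_L(t),\partial_t u(t)-\partial_t u_L(t))$ has a limit in $H^1\times L^2$ as $t\to\pm\infty$, where $u_L$ is the free Klein--Gordon evolution.

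The main obstacle is the critical case $E_p^0=1$, in which Proposition \ref{Mos3} sits on its boundary and no uniform constant is available. Here I would argue by contradiction: assuming $\|u\|_{L^4(\R,\mathcal{C}^{1/4})}=\infty$, pick a sequence of times $(t_n)$ along which the Strichartz norm blows up, and apply Theorem \ref{theorem} to $(u(t_n))$ (after checking that the compactness-at-infinity hypothesis \eqref{assum3} is propagated by the nonlinear flow, via finite speed of propagation). The orthogonality identity \eqref{orto} together with Proposition \ref{sum} shows that the energies of the profiles plus the remainder total at most $1$, so every nontrivial profile is strictly subcritical and scatters by the previous step, and a Bahouri--G\'erard-type nonlinear superposition reassembles a global Strichartz bound for $u$ itself, contradicting the hypothesis. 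The most delicate technical point will be the nonlinear superposition, i.e.\ showing that cross-interactions between distinct nonlinear profiles, and between profiles and the remainder, are asymptotically negligible in the Strichartz norm; this requires the orthogonality of the profiles to survive the nonlinear evolution, which must be handled with care when two profiles share the same scale but have orthogonal cores.
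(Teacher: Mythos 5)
Your local-existence step is essentially the paper's: a fixed point in $\mathcal{C}([0,T],H^1)\cap\mathcal{C}^1([0,T],L^2)\cap L^4([0,T],\mathcal{C}^{1/4})$, Strichartz estimates, the Trudinger--Moser inequality \eqref{Trudinger-Moser}, and absorption of the $L^\infty$ factor by the $\mathcal{C}^{1/4}$ norm (this is the logarithmic inequality of Proposition \ref{Hmu}). The first genuine gap is your global-existence step in the critical case $E_p^0=1$: you claim the local time of existence ``depends only on the energy'' and iterate. That is false at this level of regularity: the contraction constants depend on the gap between $\|u(s)\|_\mu$ (hence $\|\nabla u(s)\|_{L^2}$) and $1$, because the Trudinger--Moser exponent must stay strictly below $4\pi$; when $E_p^0=1$ the conservation law \eqref{energy} does not exclude the concentration scenario \eqref{concent}, i.e.\ $\limsup_{t\to T^*}\|\nabla u(t)\|_{L^2}=1$ with $\|u(t)\|_{L^2}$ and $\|\partial_t u(t)\|_{L^2}$ tending to $0$, in which case your local times shrink to zero and the iteration stops before $T^*$. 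Ruling out this scenario is the whole content of the critical global-existence proof: the paper, following \cite{Ibrahim}, translates time so that $T^*=0$, establishes \eqref{a1}--\eqref{a4}, integrates the conservation laws \eqref{b1}--\eqref{b2} over backward truncated light cones, and reaches a contradiction from \eqref{f1}, \eqref{f2}, \eqref{f3}. Nothing in your proposal plays this role.

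The second gap is the scattering argument. Your plan to apply Theorem \ref{theorem} to $(u(t_n))$ and run a Bahouri--G\'erard nonlinear superposition cannot work as stated: Theorem \ref{theorem} is a static decomposition of a bounded $H^1$ sequence whose remainder is small only in the Orlicz norm $L^{\phi_p}$, not in any norm that controls the Duhamel term or a critical space-time norm, and its profiles are logarithmic bubbles for which no nonlinear profile or long-time perturbation theory is developed here; the ``nonlinear superposition'' you defer to the end is exactly the missing hard ingredient. Moreover, your subcritical claim that ``a standard bootstrap closes'' to give a global $L^4(\R,\mathcal{C}^{1/4})$ bound only works for small energy; for arbitrary $E<1$, and a fortiori $E=1$, one needs the induction-on-energy scheme of \cite{Ibrahim1} (the threshold $E^*$, the norm $X(\R)=L^8(\R,L^{16})$, and the concentration radius in the critical case). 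The paper's actual route is deliberately lighter: since $|F_p(u)|\le|F_1(u)|$ and, for $p\ge2$, $|F_p(u)|\le|F_2(u)|$ as in \eqref{scat}, all nonlinear estimates reduce pointwise to the cases $p=1,2$, so global existence and scattering follow by rerunning the arguments of \cite{Ibrahim,Ibrahim1}; the profile decomposition of Theorem \ref{theorem} is not used in the proof of Theorem \ref{solution} at all. To repair your proof, either import those arguments (cone/flux estimate for global existence, induction on energy for scattering) or supply the perturbative machinery your superposition step presupposes.
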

\subsection{Technical tools}
The proof of Theorem \ref{solution} is based on priori estimates.
This requires the control of the nonlinear term
\begin{equation}\label{nonlinear}
F_p(u):=u\,\left({\rm e}^{4\pi u^2}-\Sum_{k=0}^{p-1}\frac{(4\pi)^k
u^{2k}}{k!}\right)
\end{equation}
in $L^1_t(L^2_x)$. To achieve our goal, we will resort to Strichartz
estimates for the 2D Klein-Gordon equation. These estimates, proved
in \cite{Ginibre}, state as follows:
\begin{prop}
\label{admiss} Let $T>0$ and $(q,r)\in[4,\infty]\times[2,\infty]$ an
admissible pair, i.e $$\frac{1}{q}+\frac{2}{r}=1.$$ Then,
\begin{equation}
\label{e6} \|v\|_{L^q([0,T], {\mathrm
B}^{1}_{r,2}(\R^2))}\lesssim\Big[\|v(0)\|_{H^1(\R^2)}+\|\partial_tv(0)\|_{L^2(\R^2)}+\|\square
v+v\|_{L^1([0,T],L^2(\R^2))}\Big],
\end{equation}
where ${\mathrm B}^{1}_{r,2}(\R^2)$ stands for the usual
inhomogeneous Besov space (see for example \cite{chemin} or
\cite{RS} for a detailed exposition  on Besov spaces).
\end{prop}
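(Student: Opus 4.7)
The plan is to derive \eqref{e6} from standard dispersive estimates for the Klein--Gordon half--wave propagator $U(t) := \mathrm{e}^{it\sqrt{1-\Delta}}$ via the $TT^*$ method of Keel--Tao, combined with a Littlewood--Paley decomposition so as to recover the Besov space $\mathrm{B}^{1}_{r,2}(\R^2)$. By Duhamel's formula, setting $\Lambda := \sqrt{1-\Delta}$ and $f := \square v + v$, one has
\[
v(t) = \cos(t\Lambda)\, v(0) + \frac{\sin(t\Lambda)}{\Lambda}\,\partial_t v(0) + \int_0^t \frac{\sin((t-s)\Lambda)}{\Lambda}\, f(s)\, ds,
\]
so after decomposing $\cos$ and $\sin$ into $\mathrm{e}^{\pm it\Lambda}$ it suffices to prove the homogeneous bound $\|\mathrm{e}^{\pm it\Lambda} g\|_{L^q_t \mathrm{B}^{1}_{r,2}} \lesssim \|g\|_{H^1}$, and then recover the retarded inhomogeneous estimate by the Christ--Kiselev lemma, applicable since $q \geq 4 > 2$.

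The core input is a frequency--localized dispersive bound. Letting $(P_j)_{j\geq -1}$ be a Littlewood--Paley family, one writes $\mathrm{e}^{it\Lambda} P_j g$ as an oscillatory integral with phase $x\cdot\xi + t\sqrt{1+|\xi|^2}$ localized on the annulus $|\xi|\sim 2^j$. A stationary--phase analysis, exploiting the non--degeneracy of the Hessian of $\xi\mapsto \sqrt{1+|\xi|^2}$ on this annulus, yields, for $j\geq 0$,
\[
\|\mathrm{e}^{it\Lambda} P_j g\|_{L^\infty(\R^2)} \lesssim \frac{2^{2j}}{1+|t|}\,\|P_j g\|_{L^1(\R^2)},
\]
while the low--frequency block is handled by direct computation. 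Interpolating this estimate with the trivial $L^2$--isometry of $\mathrm{e}^{it\Lambda}$ and plugging into the abstract Keel--Tao machinery produces, for any pair $(q,r)$ satisfying $\tfrac{1}{q}+\tfrac{2}{r}=1$ with $q\geq 4$,
\[
\|\mathrm{e}^{it\Lambda} P_j g\|_{L^q(\R,\,L^r(\R^2))} \lesssim 2^{j}\,\|P_j g\|_{L^2(\R^2)}.
\]

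Squaring and summing in $j$, together with Minkowski's inequality in $L^q_t\, \ell^2_j\, L^r_x$ (legitimate since $q,r\geq 2$), then delivers $\|\mathrm{e}^{it\Lambda} g\|_{L^q_t \mathrm{B}^{1}_{r,2}} \lesssim \|g\|_{H^1}$; the contribution of $\partial_t v(0)\in L^2$ via $\sin(t\Lambda)/\Lambda$ is treated identically, the $\Lambda^{-1}$ factor compensating exactly for the loss of one derivative. The main obstacle is the frequency--localized dispersive estimate above: because the phase $\sqrt{1+|\xi|^2}$ is non--homogeneous, one cannot simply rescale in $2^j$, and the stationary--phase constants must be tracked case--by--case depending on whether $2^j|t|$ is large or small, so as to recover uniformly in $j$ the decay $(1+|t|)^{-1}$ with the $2^{2j}$ weight that reflects the crossover between the Schr\"odinger--like low--frequency and the wave--like high--frequency regimes. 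Once this dispersive input is in hand, the remaining invocations of the Keel--Tao theorem, the Christ--Kiselev lemma, and the square--function characterization of Besov norms are routine.
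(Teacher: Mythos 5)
You should first note that the paper itself offers no proof of Proposition \ref{admiss}: it is quoted from Ginibre--Velo \cite{Ginibre}, so your attempt has to be judged against the Strichartz theory itself. Your general scheme (Duhamel, frequency-localized dispersive bound, $TT^*$, square-summation into a Besov norm) is the standard one, and your dispersive bound $\|{\rm e}^{it\Lambda}P_j g\|_{L^\infty}\lesssim 2^{2j}(1+|t|)^{-1}\|P_jg\|_{L^1}$ is correct (though lossy in the wave-like regime $1\le |t|\le 2^j$). The two steps that follow, however, contain genuine gaps. First, the bookkeeping is inconsistent: if the per-block estimate were $\|{\rm e}^{it\Lambda}P_jg\|_{L^qL^r}\lesssim 2^{j}\|P_jg\|_{L^2}$, then multiplying by the Besov weight $2^{j}$ and square-summing controls $\|{\rm e}^{it\Lambda}g\|_{L^q_t{\mathrm B}^1_{r,2}}$ by $\|g\|_{H^2}$, not by $\|g\|_{H^1}$; for \eqref{e6} you need a per-block constant that is bounded uniformly in $j$. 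Second, that per-block input is not what Keel--Tao gives you: with decay $|t-s|^{-1}$ (that is $\sigma=1$) the admissible range is $\frac1q+\frac1r\le\frac12$, whereas the pairs of the proposition satisfy $\frac1q+\frac1r=\frac12+\frac1{2q}>\frac12$ for $q<\infty$ (e.g.\ $(4,8/3)$); moreover the factor $2^{2j}$ in your dispersive bound produces, even for sharp $\sigma=1$-admissible pairs, the unavoidable loss $2^{2j(\frac12-\frac1r)}$. So the asserted interpolation step is not justified, and even if it were, it would not sum to the claimed conclusion.

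This is not a defect you can patch within the same strategy. A Knapp packet shows that no uniform per-block bound exists for $(4,8/3)$, even on $[0,1]$: take $\hat g=\mathbf 1_R$ with $R$ a cap of radial length $\sim 2^j$ and tangential width $\sim 2^{j/2}$ inside $|\xi|\sim 2^j$; for $|t|\le 1$ the phase is linear on $R$ up to $O(1)$, so $|{\rm e}^{it\Lambda}g|\gtrsim |R|$ on a translate of the dual box, whence $\|{\rm e}^{it\Lambda}g\|_{L^4([0,1];L^{8/3})}\gtrsim |R|^{5/8}=2^{15j/16}$ while $\|P_jg\|_{L^2}=|R|^{1/2}=2^{3j/4}$, i.e.\ a loss of at least $2^{3j/16}$ per block. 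This reflects the fact that at high frequencies the Klein--Gordon flow is wave-like and $(4,8/3)$ violates the two-dimensional wave admissibility condition $\frac2q+\frac1r\le\frac12$, so the Besov-level statement \eqref{e6} should be handled with care and checked against the precise admissible exponents in \cite{Ginibre}. What your method does yield, and what the fixed-point argument actually uses through \eqref{str}, is the wave-admissible pair $(q,r)=(4,\infty)$: from the bound $\|{\rm e}^{it\Lambda}P_j\|_{L^1\to L^\infty}\lesssim 2^{3j/2}|t|^{-1/2}$ and Keel--Tao with $\sigma=\frac12$ one gets the per-block constant $2^{3j/4}$, hence $\|v\|_{L^4([0,T];{\mathrm B}^{1/4}_{\infty,2})}\lesssim \|v(0)\|_{H^1}+\|\partial_t v(0)\|_{L^2}+\|\square v+v\|_{L^1([0,T];L^2)}$, and then ${\mathrm B}^{1/4}_{\infty,2}\hookrightarrow \mathcal{C}^{1/4}$ gives \eqref{str}. (Incidentally, with a forcing term measured in $L^1_tL^2_x$ you do not need Christ--Kiselev; Minkowski's inequality plus the homogeneous estimate suffices.) As written, your proposal does not establish \eqref{e6}.
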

Noticing that $(q,r) = (4, 8/3)$ is an admissible pair and recalling
that
$${\mathrm B}^{1}_{8/3,2}(\R^2)\hookrightarrow \mathcal{ C}^{1/4}(\R^2),$$
 we deduce that
\begin{equation}
\label{str}\|v\|_{L^4([0,T],\mathcal{C}^{1/4}(\R^2))}\lesssim\Big[\|v(0)\|_{H^1(\R^2)}+\|\partial_tv(0)\|_{L^2(\R^2)}+\|\square
v+v\|_{L^1([0,T],L^2(\R^2))}\Big].
\end{equation}
To control the nonlinear term $F_p(u)$ in $L^1_t(L^2_x)$, we will
make use of the following logarithmic inequalities proved in
\cite[Theorem 1.3]{DlogSob}.
\begin{prop}
\label{Hmu}
  For any $\lambda>\frac{2}{\pi}$ and
any $0<\mu\leq 1$, a constant $C_{\lambda}>0$ exists such that for
any function $u$ in $H^1(\R^2)\cap \mathcal{C}^{1/4}(\R^2)$, we have
\begin{equation}
\label{H-mu} \|u\|^2_{L^\infty}\leq
\lambda\|u\|_{\mu}^2\log\left(C_{\lambda,\mu} +
\frac{2\|u\|_{{\mathcal C}^{1/4}}}{\|u\|_{\mu}}\,\right),
\end{equation}
where $\|u\|_{\mu}^2:=\|\nabla u\|_{L^2}^2+\mu^2\|u\|_{L^2}^2$.
\end{prop}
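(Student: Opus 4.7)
The plan is to prove Proposition \ref{Hmu} as a quantitative Brezis--Gallou\"et-type inequality, using a Littlewood--Paley decomposition that splits $u$ into low and high frequencies at a threshold $N>0$ to be chosen later. Writing $u=u_{\leq N}+u_{>N}$ with $u_{\leq N}$ the truncation of $\widehat u$ to $\{|\xi|\leq N\}$, I would treat each piece separately: the low part by a Fourier-$L^1$ estimate weighted by $(\mu^2+|\xi|^2)^{1/2}$, the high part by the Hölder regularity.

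For the low frequencies, Fourier inversion followed by Cauchy--Schwarz gives
\begin{equation*}
\|u_{\leq N}\|_{L^\infty}^2 \;\lesssim\; \Big(\int_{|\xi|\leq N}\frac{d\xi}{\mu^2+|\xi|^2}\Big)\,\|u\|_\mu^2,
\end{equation*}
and a direct computation in polar coordinates yields $\int_{|\xi|\leq N}(\mu^2+|\xi|^2)^{-1}\,d\xi=\pi\log(1+N^2/\mu^2)$. After tracking the Fourier-convention constants, this produces
$\|u_{\leq N}\|_{L^\infty}^2\leq \frac{1}{4\pi}\log\!\big(1+N^2/\mu^2\big)\|u\|_\mu^2$, which for $N/\mu$ large behaves like $\frac{1}{2\pi}\log(N/\mu)\|u\|_\mu^2$. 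For the high frequencies, the Besov/Hölder characterization $\|\Delta_j u\|_{L^\infty}\lesssim 2^{-j/4}\|u\|_{\mathcal C^{1/4}}$ and geometric summation over $2^j>N$ give $\|u_{>N}\|_{L^\infty}\lesssim N^{-1/4}\|u\|_{\mathcal C^{1/4}}$.

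Next I would combine the two pieces via $(a+b)^2\leq(1+\eta)a^2+(1+\eta^{-1})b^2$ for small $\eta>0$ and optimize $N$ by equating the two contributions, i.e.\ choosing $N\sim (\|u\|_{\mathcal C^{1/4}}/\|u\|_\mu)^4$. The factor $4$ (reciprocal of the Hölder exponent $1/4$) multiplies the logarithm, and the leading coefficient becomes $\frac{1}{2\pi}\cdot 4=\frac{2}{\pi}$, yielding an estimate of the form
\begin{equation*}
\|u\|_{L^\infty}^2 \;\leq\; \tfrac{2}{\pi}(1+\eta)\,\|u\|_\mu^2\log\!\Big(\tfrac{\|u\|_{\mathcal C^{1/4}}}{\|u\|_\mu}\Big) + \text{lower order}.
\end{equation*}
Since $\log(C_{\lambda,\mu}+2r)=\log r+O(1)$ for large $r=\|u\|_{\mathcal C^{1/4}}/\|u\|_\mu$, the error and the $\eta$-slack can be absorbed into the slack $\lambda>\tfrac{2}{\pi}$ and into the additive constant $C_{\lambda,\mu}$, provided one also treats separately the regime where $\|u\|_{\mathcal C^{1/4}}/\|u\|_\mu$ is bounded (there the inequality is trivial via $H^1\cap\mathcal C^{1/4}\hookrightarrow L^\infty$ and a large enough $C_{\lambda,\mu}$). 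The dependence on $\mu\in(0,1]$ enters only through the low-frequency denominator $\mu^2+|\xi|^2$ and is absorbed into $C_{\lambda,\mu}$.

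The main obstacle is tracking the sharp constant $2/\pi$: it requires using the Cauchy--Schwarz weight $(\mu^2+|\xi|^2)^{1/2}$ rather than a cruder Bernstein bound (which would give the coefficient $1/\pi$ but with a worse logarithm), and it requires controlling the multiplicative slack coming from both the $(a+b)^2$ splitting and the passage from $\log(N/\mu)$ to $\log(C_{\lambda,\mu}+2\|u\|_{\mathcal C^{1/4}}/\|u\|_\mu)$. A minor technical care is also needed to verify that the low-frequency annulus $|\xi|\lesssim\mu$ does not spoil the optimization—this is exactly the role of the parameter $\mu$ and the inhomogeneous norm $\|\cdot\|_\mu$, which keeps the integral $\int(\mu^2+|\xi|^2)^{-1}d\xi$ finite at the origin.
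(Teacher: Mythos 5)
Note first that the paper itself offers no proof of Proposition \ref{Hmu}: it is imported verbatim from \cite[Theorem 1.3]{DlogSob} (their statement with H\"older exponent $\alpha$, specialized to $\alpha=1/4$, where the sharp threshold is $\frac{1}{2\pi\alpha}=\frac{2}{\pi}$). Your argument is, in substance, the standard proof of that result and it is correct: with the usual Fourier normalization the weighted Cauchy--Schwarz step does give $\|u_{\leq N}\|_{L^\infty}^2\leq \frac{1}{4\pi}\log\big(1+N^2/\mu^2\big)\|u\|_{\mu}^2$, the H\"older bound on the high frequencies gives $N^{-1/4}\|u\|_{\mathcal{C}^{1/4}}$, and the choice $N\sim\big(\|u\|_{\mathcal{C}^{1/4}}/\|u\|_{\mu}\big)^4$ turns the logarithm into $8\log\big(\|u\|_{\mathcal{C}^{1/4}}/\|u\|_{\mu}\big)$ plus $\mu$-dependent additive terms, hence the leading coefficient $\frac{8}{4\pi}=\frac{2}{\pi}$. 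The $(1+\eta)$ slack from splitting, the $O(\|u\|_{\mu}^2)$ contribution of the high part, the additive $\frac{1}{2\pi}\log(1/\mu)\|u\|_{\mu}^2$, and the regime where $\|u\|_{\mathcal{C}^{1/4}}/\|u\|_{\mu}$ stays bounded are all legitimately absorbed into the strict inequality $\lambda>\frac{2}{\pi}$ and into $C_{\lambda,\mu}$, exactly as you indicate.

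One step needs a small repair: you define $u_{\leq N}$ by a sharp Fourier truncation but then estimate $u_{>N}$ by summing $\|\Delta_j u\|_{L^\infty}\lesssim 2^{-j/4}\|u\|_{\mathcal{C}^{1/4}}$ over $2^j>N$. The indicator of $\{|\xi|>N\}$ is not an $L^\infty$-bounded multiplier, so the dyadic block straddling the cutoff is not controlled this way. The standard fix is to split with a smooth Littlewood--Paley low-pass $S_J u$, $2^J\approx N$: since $|\widehat{S_J u}|\leq|\widehat{u}|$ and its support lies in $|\xi|\lesssim 2^J$, the same $\frac{1}{4\pi}\log$ bound holds with $N$ changed by a fixed factor (an additive shift in the logarithm), while $u-S_Ju=\sum_{j\geq J}\Delta_j u$ is exactly the sum you estimate. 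With that adjustment your proof goes through and reproduces the sharp constant; it is essentially the same route as the cited reference rather than an alternative one.
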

\subsection{Proof of Theorem \ref{solution}}
The proof of this result, divided into three steps, is inspired from
the proofs of Theorems $1.8$, $1.11$, $1.12$ in \cite{Ibrahim} and
Theorem $1.3$ in \cite{Ibrahim1}.
\subsubsection{Local existence}
 Let us start by proving the local existence to the Cauchy problem \eqref{exp_p} in the case where $\|\nabla u_0\|_{L^2(\R^2)}< 1$. To do so, we use a standard fixed-point argument and introduce for any nonnegative time $T$ the following space:
$$\mathcal{E}_T=\mathcal{C}([0,T],H^1(\R^2))\cap\mathcal{C}^1([0,T],L^2(\R^2))\cap L^4([0,T],\mathcal{C}^{1/4}(\R^2))$$
endowed with the norm
$$\|u\|_T:=\underset{0\leq t\leq T}{\sup}\Big[\|u(t)\|_{H^1}+\|\partial_t u(t)\|_{L^2}\Big]+\|u\|_{L^4([0,T],\mathcal{C}^{1/4})}.$$
 For a positive time $T$ and a positive real number $\delta$, we denote by $\mathcal{E}_T(\delta)$ the ball in the space $\mathcal{E}_T$ of radius $\delta$ and centered at the origin. On this ball, we define the map $\Phi$ by
$$v\longmapsto \Phi(v)=\widetilde{v},$$
where
$$\square \widetilde{v}+\widetilde{v}=-F_p(v+v_0),\quad \widetilde{v}(0)=\partial_t\widetilde{v}(0)=0$$
and $v_0$ is the solution of the free Klein-Gordon equation
$$\square v_0+v_0=0,\quad v_0(0)=u_0,\quad and \quad \partial_tv_0(0)=u_1.$$
Now, the goal is to show that if $\delta$ and $T$ are small enough,
then the map $\Phi$ is well-defined from $\mathcal{E}_T(\delta)$
into itself and it is a contraction. To prove that $\Phi$ is
well-defined, it suffices in view of the Strichartz estimates
\eqref{e6} to estimate $F_p(v+v_0)$ in the space
$L^1([0,T],L^2(\R^2))$. Arguing as in \cite{Ibrahim} and using the
H\"older inequality and the Sobolev embedding, we obtain for any
$\epsilon>0$
\begin{eqnarray*}
\Int_{\R^2}|F_p(v+v_0)|^2\;dx&\leq& \Int_{\R^2}|F_1(v+v_0)|^2\;dx\\
&\lesssim&\|v+v_0\|_{H^1}^2\,{\rm
e}^{4\pi\|v+v_0\|_{L^\infty}^2}\left\|{\rm e}^{4\pi
(v+v_0)^2}-1\right\|_{L^{1+\epsilon}}.
\end{eqnarray*}
 Since $\|\nabla u_0\|_{L^2}<1$, we can choose $\mu>0$ such that $\|u_0\|_\mu<1$. Since $v_0$ is continuous in time, there exist a time $T_0$ and a constant $0<c<1$ such that for any $t$ in $[0,T_0]$ we have
$$\| v_0(t)\|_\mu\leq c.$$
According to Proposition \ref{Hmu}, we infer that
$${\rm e}^{4\pi\|v+v_0\|_{L^\infty}^2}\lesssim\left(1+\frac{\|v+v_0\|_{\mathcal{C}^{1/4}}}{\delta+c}\right)^{8\eta},$$
for some $0<\eta<1$. Besides, applying the Trudinger-Moser
inequality \eqref{Trudinger-Moser} for $p=1$, the fact that
$$4\pi(1+\epsilon)(\delta+c)^2 \longrightarrow 4\pi c<4\pi \quad
\mbox{as}\;\epsilon,\,\delta\rightarrow 0\quad
\mbox{and}\quad\left\|\nabla\left(\frac{v+v_0}{\delta+c}\right)\right\|_{L^2}\leq
1$$ ensures that
\begin{eqnarray*}
\left\|{\rm e}^{4\pi
(v+v_0)^2}-1\right\|_{L^{1+\epsilon}}^{1+\epsilon}
&\leq& C_{\epsilon}\left\|{\rm e}^{4\pi (1+\epsilon)(v+v_0)^2}-1\right\|_{L^1}\\
&\leq&C_{\epsilon,\delta}\|v+v_0\|_{L^2}^2\\
&\leq&C_{\epsilon,\delta}(1+\|u_0\|_{H^1}+\|u_1\|_{L^2})^2.
\end{eqnarray*}
Therefore, for any $0<T\leq T_0$, we obtain that
$$\|F_p(v+v_0)\|_{L^1([0,T],L^2(\R^2))}\lesssim T^{1-\eta}(1+\|u_0\|_{H^1}+\|u_1\|_{L^2})^{4\eta}.$$
Now, to prove that $\Phi$ is a contraction (at least for $T$ small),
let us consider two elements $v_1$ and $v_2$ in
$\mathcal{E}_T(\delta)$. Notice that, for any $\epsilon>0$,
\begin{eqnarray*}
|F_p(v_1+v_0)-F_p(v_2+v_0)| &=&|v_1-v_2|(1+8\pi\overline{v}^2)\left({\rm e}^{4\pi\overline{v}^2}-\Sum_{k=0}^{p-2}\frac{(4\pi)^k\overline{v}^{2k}}{k!}\right)\\
&\leq& C_\epsilon|v_1-v_2|\left({\rm
e}^{4\pi(1+\epsilon)\overline{v}^2}-1\right),
\end{eqnarray*}
where $\overline{v}=(1-\theta)(v_0+v_1)+\theta(v_0+v_2),$ for some
$\theta=\theta(t,x)\in[0,1].$ Using a convexity argument, we get
\begin{eqnarray*}
|F_p(v_1+v_0)-F_p(v_2+v_0)|&\leq& C_\epsilon\left|(v_1-v_2)\left({\rm e}^{4\pi(1+\epsilon)(v_1+v_0)^2}-1\right)\right|\\
&+&C_\epsilon\left|(v_1-v_2)\left({\rm
e}^{4\pi(1+\epsilon)(v_2+v_0)^2}-1\right)\right|.
\end{eqnarray*}
This implies, in view of Strichartz estimates \eqref{str}, that
\begin{eqnarray*}
\|\Phi(v_1)-\Phi(v_2)\|_{T}&\lesssim& \|F_p(v_1+v_0)-F_p(v_2+v_0)\|_{L^1([0,T],L^2(\R^2))}\\
&\leq& C_\epsilon\Int_0^T\left\|(v_1-v_2)\left({\rm e}^{4\pi(1+\epsilon)(v_1+v_0)^2}-1\right)\right\|_{L^2}\,dt\\
&+&  C_\epsilon\Int_0^T\left\|(v_1-v_2)\left({\rm
e}^{4\pi(1+\epsilon)(v_2+v_0)^2}-1\right)\right\|_{L^2}\,dt,
\end{eqnarray*}
which leads along the same lines as above to
\begin{eqnarray*}
% \nonumber to remove numbering (before each equation)
  \|\Phi(v_1)-\Phi(v_2)\|_T &\lesssim&  T^{1-(1+\epsilon)\eta}(1+\|u_0\|_{H^1}+\|u_1\|_{L^2})^{4(1+\epsilon)\eta}\|v_1-v_2\|_T.
\end{eqnarray*}
If the parameter $\epsilon$ is small enough, then $(1+\epsilon)\eta<1$ and therefore, for $T$ small enough, $\Phi$ is a contraction map. This implies the uniqueness of the solution in $v_0+\mathcal{E}_T(\delta)$.\\
Now, we shall prove the uniqueness in the energy space. The idea
here is to establish that, if $u=v_0+v$ is a solution of
\eqref{exp_p} in $\mathcal{C}([0,T],H^1(\R^2))\cap
\mathcal{C}^1([0,T],L^2(\R^2))$, then necessarily $v\in
\mathcal{E}_T(\delta)$ at least for $T$ small. Starting from the
fact that $v$ satisfies
$$\square v +v=-F_p(v+v_0),\quad v(0)=\partial_tv(0)=0,$$
we are reduced, thanks to the Strichartz estimates \eqref{e6}, to
control the term $ F_p(v+v_0)$ in the space $L^1([0,T],L^2(\R^2)).$
But $|F_p(v+v_0)|\leq |F_1(v+v_0)|$, which leads to the result
arguing exactly as in \cite{Ibrahim}.

\subsubsection{Global existence}
In this section, we shall establish that our solution is global in
time both in subcritical and critical cases. Firstly, let us notice
that the assumption $E_p^0\leq 1$ implies that $\|\nabla u_0
\|_{L^2(\R^2)}<1$, which ensures in view of Section 3.3.1 the
existence of a unique maximal solution $u$ defined on $[0,T^*)$
where $0<T^*\leq\infty$ is the lifespan of $u$. We shall
proceed by contradiction assuming that $T^*<\infty$. In the
subcritical case, the conservation law \eqref{energy} implies that
 $$ \Sup_{t\in(0,T^*)}\|\nabla u(t)\|_{L^2(\R^2)}<1.$$
Let then $0<s<T^*$ and consider the following Cauchy problem:
\begin{equation}\label{v}
\square v+v+F_p(v)=0,\quad v(s)=u(s),\quad\mbox{and}\quad
\partial_tv(s)=\partial_tu(s).
\end{equation}
As in the first step of the proof, a fixed-point argument ensures the existence of $\tau>0$ and a unique solution $v$ to \eqref{v} on the interval $[s,s+\tau]$. Noticing that $\tau$ does not depend on $s$, we can choose $s$ close to $T^*$ such that $T^*-s<\tau$. So, we can prolong the solution $u$ after the time $T^*$, which is a contradiction.\\
In the critical case, we cannot apply the previous argument because
it is possible that the following concentration phenomenon holds:
\begin{equation}\label{concent}
\Limsup_{t\rightarrow T^*}\|\nabla u(t)\|_{L^2(\R^2)}=1.
\end{equation}
In fact, we shall show that \eqref{concent} cannot hold in this
case. To go to this end, we argue as in the proof of Theorem $1.12$
in \cite{Ibrahim}. Firstly, since the first equation of the Cauchy
problem \eqref{exp_p} is invariant under time translation, we can
assume that $T^*=0$ and that the initial time is $t=-1$. Similarly
to \cite[Proposition 4.2, Corollary 4.4]{Ibrahim}, it follows that
the maximal solution $u$ satisfies
\begin{equation}\label{a1}
\Limsup_{t\rightarrow 0^-}\|\nabla u(t)\|_{L^2(\R^2)}=1,
\end{equation}
\begin{equation}\label{a2}
\Lim_{t\rightarrow 0^-}\|u(t)\|_{L^2(\R^2)}=0,
\end{equation}
\begin{equation}\label{a3}
\Lim_{t\rightarrow 0^-}\Int_{|x-x^*|\leq -t}|\nabla
u(t,x)|^2\;dx=1,\quad\mbox{and}
\end{equation}
\begin{equation}\label{a4}
\forall t<0,\quad\Int_{|x-x^*|\leq -t}e_p(u)(t,x)\;dx=1,
\end{equation}
for some $x^*\in \R^2$, where $e_p(u)$ denotes the energy density
defined by
$$e_p(u)(t,x):=(\partial_t u)^2+|\nabla u|^2+\frac{1}{4\pi}\left({\rm e}^{4\pi u^2}-1-\Sum_{k=2}^{p}\Frac{(4\pi)^ku^{2k}}{k!}\right).$$
 Without loss of generality, we can assume that $x^*=0$, then multiplying the equation of the problem \eqref{exp_p} respectively by $\partial_tu$ and $u$, we obtain formally
\begin{equation}\label{b1}
\partial_te_p(u)-div_x(2\partial_tu\nabla u)=0,
\end{equation}
\begin{equation}\label{b2}
\partial_t(u\partial_tu)-div_x(u\nabla u)+|\nabla u|^2 -|\partial_t u|^2+u^2{\rm e}^{4\pi u^2}-\Sum_{k=1}^{p-1}\Frac{(4\pi)^ku^{2k+2}}{k!}=0.
\end{equation}
Integrating the conservation laws \eqref{b1} and \eqref{b2} over the
backward truncated cone
$$K_S^T:=\Big\{(t,x)\in\R\times \R^2 \;\mbox{ such that}\;S\leq t\leq T\;\mbox{and}\;|x|\leq -t\Big\}$$
for $S<T<0$, we get
\begin{equation}\label{c1}
\Int_{B(-T)}e_p(u)(T,x)\;dx-\Int_{B(-S)}e_p(u)(S,x)\;dx
\end{equation}
\begin{equation*}
 =\frac{-1}{\sqrt{2}}\Int_{M_S^T}\left[\left|\partial_t u\frac{x}{|x|}+\nabla u\right|^2+\frac{1}{4\pi}\left({\rm e}^{4\pi u^2}-1-\Sum_{k=2}^p\frac{(4\pi)^ku^{2k}}{k!}\right)\;dx\,dt\right],
\end{equation*}
\begin{equation}\label{c2}
\Int_{B(-T)}\partial_tu(T)u(T)\;dx-\Int_{B(-S)}\partial_t
u(S)u(S)\;dx+\frac{1}{\sqrt{2}}\Int_{M_S^T}\left(\partial_t u+\nabla
u.\frac{x}{|x|}\right)u\;dx\,dt
\end{equation}
\begin{equation*}
+\Int_{K_S^T}\left(|\nabla u|^2 -|\partial_t u|^2+u^2{\rm e}^{4\pi
u^2}-\Sum_{k=1}^{p-1}\Frac{(4\pi)^ku^{2k+2}}{k!}\right)\;dx\,dt=0,
\end{equation*}
where $B(r)$ is the ball centered at $0$ and of radius $r$ and
$$M_S^T:=\Big\{(t,x)\in\R\times \R^2 \;\mbox{ such that}\;S\leq t\leq T\;\mbox{and}\;|x|=-t\Big\}.$$
According to \eqref{a4} and \eqref{c1}, we infer that
$$\Int_{M_S^T}\left[\left|\partial_t u\frac{x}{|x|}+\nabla u\right|^2+\frac{1}{4\pi}\left({\rm e}^{4\pi u^2}-1-\Sum_{k=2}^p\frac{(4\pi)^ku^{2k}}{k!}\right)\right]\;dx\,dt=0.$$
This implies, using \eqref{c2} and Cauchy-Schwarz inequality, that
\begin{equation}\label{c3}
\Int_{B(-T)}\partial_tu(T)u(T)\;dx-\Int_{B(-S)}\partial_t
u(S)u(S)\;dx
\end{equation}
\begin{equation*}
+\Int_{K_S^T}\left(|\nabla u|^2 -|\partial_t u|^2+u^2{\rm e}^{4\pi
u^2}-\Sum_{k=1}^{p-1}\Frac{(4\pi)^ku^{2k+2}}{k!}\right)\;dx\,dt=0,
\end{equation*}
By virtue of Identities \eqref{a1} and \eqref{a2} and the
conservation law \eqref{energy}, it can be seen that
\begin{equation}\label{c4}
\partial_tu(t)\underset{t\rightarrow 0}\longrightarrow 0\quad\mbox{in}\;L^2(\R^2),
\end{equation}
which ensures by Cauchy-Schwarz inequality that
\begin{equation}\label{c5}
\Int_{B(-T)}\partial_tu(T)u(T)\;dx\rightarrow 0.
\end{equation}
Letting $T\rightarrow 0$ in \eqref{c3}, we deduce from \eqref{c5}
and the fact that $u^2{\rm e}^{4\pi
u^2}-\Sum_{k=1}^{p-1}\Frac{(4\pi)^ku^{2k+2}}{k!}$ is positive
\begin{equation}\label{e1}
-\Int_{B(-S)}\partial_t u(S)u(S)\;dx\leq -\Int_{K_S^0}|\nabla
u|^2\,dx\,dt +\Int_{K_S^0}|\partial_t u|^2\,dx\,dt.
\end{equation}
Multiplying Inequality \eqref{e1} by the positive number
$-\frac{1}{S}$, we deduce that
\begin{equation}\label{e2}
\Int_{B(-S)}\partial_t u(S)\frac{u(S)}{S}\;dx \leq
\frac{1}{S}\Int_{K_S^0}|\nabla u|^2\;dx\;dt
-\frac{1}{S}\Int_{K_S^0}|\partial_t u|^2\;dx\;dt.
\end{equation}
Now, Identity \eqref{c4} leads to
\begin{equation}\label{f1}
\Lim_{S\rightarrow 0^-}\frac{1}{S}\Int_{K_S^0}|\partial_t
u|^2\;dx\;dt=0.
\end{equation}
Moreover, using \eqref{a3}, it is clear that
\begin{equation}\label{f2}
\Lim_{S\rightarrow 0^-} \frac{1}{S}\Int_{K_S^0}|\nabla
u|^2\;dx\;dt=-1.
\end{equation}
Finally, since
\begin{equation*}
    \frac{u(S)}{S}=\frac{1}{S} \int_{0}^{S}\partial_tu(\tau)d\tau,
\end{equation*}
then $(\frac{u(S)}{S})$ is bounded in $L^2(\R^2)$ and hence
\begin{equation}\label{f3}
\Lim_{S\rightarrow 0^-}\Int_{B(-S)}\partial_t
u(S)\frac{u(S)}{S}\;dx=0.
\end{equation}
The identities \eqref{f1}, \eqref{f2} and \eqref{f3} yield a
contradiction in view of \eqref{e2}. This achieves the proof of the
global existence in the critical case.

\subsubsection{Scattering}
Our concern now is to prove that, in the subcritical and critical
cases, the solution of the equation \eqref{exp_p} approaches a
solution of a free wave equation when the time goes to infinity.
Using the fact that \bq\label{scat}
|F_p(u)|\leq|F_2(u)|,\quad\forall p\geq 2, \eq we can apply the
arguments used in \cite{Ibrahim1}. More precisely, in the
subcritical case the key point consists to prove that there exists
an increasing function $C:[0,1[\longrightarrow [0,\infty[$ such
that for any $0\leq E<1$, any global solution $u$ of the Cauchy
problem \eqref{exp_p} with $E_p(u)\leq E$ satisfies
\begin{equation}\label{X}
\|u\|_{X(\R)}\leq C(E),
\end{equation}
where $X(\R)=L^8(\R, L^{16}(\R^2))$. Now, denoting by
$$E^*:=\sup\Big\{0\leq E<1;\; \Sup_{E_p(u)\leq E}\|u\|_{X(\R)}<\infty\Big\},$$
and arguing as in \cite[Lemma 4.1]{Ibrahim1}, we can show that
Inequality \eqref{X} is satisfied if $E_p(u)$ is small, which
implies that $E^*>0$. Now our goal is to prove that $E^*=1$. To do
so, let us proceed by contradiction and assume that $E^*<1$. Then,
for any $E\in]E^*,1[$ and any $n>0$, there exists a global solution
$u$ to \eqref{exp_p} such that $E_p(u)\leq E$ and $\|u\|_{X(\R)}>n$.
By time translation, one can reduce to \bq\label{X1}
\|u\|_{X(]0,\infty[)}>\frac{n}{2}. \eq
Along the same lines as the proof of Proposition 5.1 in \cite{Ibrahim1}, we can show taking advantage of \eqref{scat} that if $E$ is close enough to $E^*$, then $n$ cannot be arbitrarily large which yields a contradiction and ends the proof of the result in the subcritical case.\\
The proof of the scattering in the critical case is done as in
Section 6 in \cite{Ibrahim1} once we observed Inequality
\eqref{scat}. It is based on the notion of concentration radius
$r_\epsilon(t)$ introduced in \cite{Ibrahim1}.

\subsection{Qualitative study }
In this section we shall investigate the feature of solutions of the
two-dimensional nonlinear Klein-Gordon equation \eqref{exp_p} taking
into account the different regimes. As in \cite{Bahouri}, the
approach that we adopt here is the one introduced by P. G\'erard in
\cite{Ge1} which consists in comparing the evolution of oscillations
and concentration effects displayed by sequences of solutions of the
nonlinear Klein-Gordon  equation (\ref{exp_p}) and solutions of the
free linear Klein-Gordon equation.  \beq \label{LKG} \square
v+v=0.\eeq More precisely, let $(\varphi_n, \psi_n)$ be a sequence
of data in $H^1\times L^2$ supported in some fixed ball and
satisfying
\begin{equation}
 \label{weak-conv}
 \varphi_n\rightharpoonup 0\quad\mbox{in}\; H^1,\quad\psi_n\rightharpoonup 0
 \quad\mbox{in}\; L^2,\end{equation}
such that
\begin{equation}
 \label{subcrit}
 E_p^n\leq 1,\quad n\in \N
\end{equation}
where $E_p^n$ stands for the energy of $(\varphi_n, \psi_n)$ given
by $$ E_p^n=\|\psi_n\|_{L^2}^2+\|\nabla\varphi_n\|_{L^2}^2+
\frac{1}{4\pi}\; \Big\|{\rm e}^{4\pi
\varphi_n^2}-1-\Sum_{k=2}^p\frac{(4\pi)^k}{k!}\varphi_n^{2k}\Big\|_{L^1},
$$ and let us consider $(u_n)$ and $(v_n)$ the sequences of finite
energy solutions of (\ref{exp_p}) and (\ref{LKG}) such that $$ (u_n,
\partial_t u_n)(0)=(v_n, \partial_t v_n)(0)=(\varphi _n,\psi_n).
$$ Arguing as in \cite{Ge1}, the notion of linearizability is defined as follows:
\begin{defi}
Let $T$ be a positive time. We shall say that the sequence $(u_n)$
is linearizable on $[0,T]$, if $$
\Sup_{t\in[0,T]}E_c(u_n-v_n,t)\longrightarrow 0\quad\mbox{as}\quad
n\rightarrow\infty, $$ where $E_c(w,t)$ denotes the kinetic energy
defined by:
$$E_c(w,t)=\Int_{{\mathbb{R}}^2}\left[|\partial_t
w|^2+|\nabla_x w|^2+|w|^2\right](t,x)\;dx.
$$
\end{defi}
For any time slab $I\subset\R$, we shall denote
$$
\|v\|_{\mbox{\tiny ST}(I)}:=\sup_{(q,r)\; \mbox{\tiny
admissible}}\;\|v\|_{L^q(I; {\mathrm B}^{1}_{r,2}(\R^2))}\,.
$$
By interpolation argument, this Strichartz norm is equivalent to
$$
\|v\|_{L^\infty(I; H^1(\R^2))}+\|v\|_{L^4(I; {\mathrm
B}^{1}_{8/3,2}(\R^2))}\,.
$$
As $ {\mathrm B}^{1}_{r,2}(\R^2)\hookrightarrow L^p(\R^2)$ for all
$r\leq p<\infty$ (and $r\leq p\leq\infty$ if $r>2$), it follows that
\bq \label{LqLp} \|v\|_{L^q(I; L^p)}\lesssim\|v\|_{\mbox{\tiny
ST}(I)},\quad \frac{1}{q}+\frac{2}{p}\leq 1\,. \eq
 As in \cite{Bahouri}, in the subcritical case, i.e $\Limsup_{n\rightarrow\infty}\;E_p^n<1$,
the nonlinearity does not induce any effect on the behavior of the
solutions. But, in the critical case i.e
$\Limsup_{n\rightarrow\infty}\;E_p^n=1$, it turns out that a
nonlinear effect can be produced. More precisely, we have the
following result:
\begin{thm}\label{both cases}
Let $T$ a strictly positive time. Then
\begin{enumerate}
  \item If $\underset{n\rightarrow
\infty}{\limsup}\,E_p^n<1$, the sequence $(u_n)$ is linearizable on
$[0,T]$.
  \item If
$\underset{n\rightarrow \infty}{\limsup}\, E_p^n=1$, the sequence
$(u_n)$ is linearizable on $[0,T]$ provided that the sequence
$(v_n)$ satisfies \beq
\label{crit-cond}\limsup_{n\to\infty}\;\|v_n\|_{L^\infty([0,T];
{L^{\Phi_p}})}<\frac{1}{\sqrt{4\pi}}\cdot \eeq

\end{enumerate}
  \end{thm}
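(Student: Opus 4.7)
The plan is to set $w_n := u_n - v_n$, which solves the inhomogeneous linear Klein--Gordon equation
\[
\square w_n + w_n = -F_p(u_n), \qquad (w_n, \partial_t w_n)(0) = (0,0).
\]
By the Strichartz estimate of Proposition \ref{admiss}, and in particular \eqref{str},
\[
\sup_{t\in[0,T]} E_c(w_n,t)^{1/2} + \|w_n\|_{L^4([0,T]; \mathcal{C}^{1/4})} \lesssim \|F_p(u_n)\|_{L^1([0,T]; L^2(\R^2))},
\]
so linearizability on $[0,T]$ is equivalent to $\|F_p(u_n)\|_{L^1 L^2} \to 0$. Two facts about the free solutions will be used throughout: Proposition \ref{admiss} applied to $v_n$ yields a uniform bound on $\|v_n\|_{L^4([0,T]; \mathcal{C}^{1/4})}$, and the uniform compact support of $(\varphi_n, \psi_n)$ combined with \eqref{weak-conv}, Rellich's theorem, and finite propagation speed for \eqref{LKG} yields strong convergence $v_n \to 0$ in $\mathcal{C}([0,T]; L^q(\R^2))$ for every $2\leq q<\infty$.

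The pointwise identity $|F_p(u)| = |u|\,\phi_p(\sqrt{4\pi}|u|)$ reduces the control of $\|F_p(u_n)(t)\|_{L^2}^2$ to an estimate on $\int u_n^2\,\phi_p(\sqrt{4\pi}u_n)^2\,dx$. In case (1), I would use $\|\nabla u_n\|_{L^\infty L^2}^2 \leq E_p^n \leq E < 1$ and apply Proposition \ref{Mos3} to $u_n/\sqrt{E}$ with $\alpha<4\pi$ chosen close to $4\pi$, producing the uniform-in-time Trudinger--Moser bound
\[
\int \Bigl(e^{4\pi u_n^2} - \sum_{k=0}^{p-1} \frac{(4\pi u_n^2)^k}{k!}\Bigr) dx \leq C(E,p)\,\|u_n\|_{L^{2p}}^{2p}.
\]
Combined with the logarithmic inequality of Proposition \ref{Hmu}, which trades the $L^\infty$-norm of $u_n$ (appearing in the exponential factor generated by $\phi_p^2$) for its $\mathcal{C}^{1/4}$-norm, a H\"older splitting and integration in time using $u_n \in L^4_t\mathcal{C}^{1/4}_x$ together with the strong convergence $v_n \to 0$ in $L^{2p}$ and a continuity bootstrap on $\|w_n\|_{L^\infty L^{2p}}$ together deliver $\|F_p(u_n)\|_{L^1 L^2} \to 0$.

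For case (2), $\|\nabla u_n\|_{L^\infty L^2}^2$ may approach $1$, so the rescaling above breaks down; the hypothesis \eqref{crit-cond} is the precise substitute that restores room in the Trudinger--Moser estimate. By the definition of the Orlicz norm, it provides $c>\sqrt{4\pi}$ and $n_0$ such that
\[
\int \phi_p(c\,v_n(t,x))\,dx \leq \kappa, \qquad t\in[0,T],\ n\geq n_0,
\]
which means $e^{c^2 v_n^2}$ is uniformly integrable with $c^2>4\pi$. I would then bound $u_n^2 \leq (1+\eta) v_n^2 + (1+\eta^{-1}) w_n^2$ for a small $\eta>0$ chosen so that $4\pi(1+\eta)\leq c^2$: the $v_n$-piece of $\phi_p(\sqrt{4\pi}u_n)^2$ is absorbed by the extra integrability coming from \eqref{crit-cond}, while the $w_n$-piece is handled by Proposition \ref{Mos3} applied to $w_n$ under the bootstrap $(1+\eta^{-1})\|\nabla w_n\|_{L^\infty L^2}^2<1$, closed by partitioning $[0,T]$ into finitely many subintervals on which $\|v_n\|_{L^4 \mathcal{C}^{1/4}}$ is small and iterating the Strichartz closure. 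I expect this fine tuning to be the main obstacle: the finite margin $c/\sqrt{4\pi}-1$ supplied by \eqref{crit-cond} must simultaneously absorb the Young cross-term (costing the factor $1+\eta^{-1}$ on $w_n$) and the loss incurred in the Strichartz closure, so $\eta$, the bootstrap threshold and the partition of $[0,T]$ must be balanced in a tightly coupled way.
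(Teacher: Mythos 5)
Your overall skeleton (set $w_n=u_n-v_n$, apply energy/Strichartz estimates, and reduce linearizability to $\|F_p(u_n)\|_{L^1([0,T];L^2)}\to 0$) is exactly the paper's, and your subcritical argument is broadly compatible with it, provided you realize that the uniform-in-$n$ bound on $\|u_n\|_{L^4([0,T];\mathcal{C}^{1/4})}$ you invoke is not free from Theorem \ref{solution}: it is the content of the paper's Lemma \ref{subc-est}, and in your scheme it must come out of the same bootstrap you sketch, which should be said explicitly.

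The genuine gap is in case (2). First, a quantitative point: since you measure $F_p(u_n)$ in $L^2_x$, the relevant quantity is $\int u_n^2\,\phi_p(\sqrt{4\pi}\,u_n)^2\,dx$, whose dominant part behaves like $\int u_n^2\,{\rm e}^{8\pi u_n^2}\,dx$. After your Young splitting $u_n^2\le(1+\eta)v_n^2+(1+\eta^{-1})w_n^2$ you therefore need integrability of ${\rm e}^{8\pi(1+\eta)v_n^2}$ (and more, after the H\"older exponents), whereas \eqref{crit-cond} only gives $\int\phi_p(c\,v_n)\,dx\le\kappa$ for some $c^2$ slightly above $4\pi$, nowhere near $8\pi$. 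So the assertion that ``the $v_n$-piece is absorbed by the extra integrability coming from \eqref{crit-cond}'' fails as stated; one must first peel off an $L^\infty_x$ factor via the logarithmic inequality of Proposition \ref{Hmu}, and in the critical regime ($\|\nabla u_n\|_{L^2}^2$ close to $1$) the resulting power of the $\mathcal{C}^{1/4}$ norm is exactly at the borderline of the $L^4_t$ Strichartz integrability, so this route does not close by itself. Second, a structural point: estimating $F_p(u_n)$ directly produces a bootstrap inequality whose term linear in $\|w_n\|$ carries an $O(1)$ coefficient, and nothing in your sketch produces the $o(1)$ source of smallness (time-interval partitioning shrinks constants only for the pieces with spare time integrability, which is precisely what is missing at the critical exponent). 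The paper avoids both problems by Taylor-expanding $F_p(u_n)=F_p(v_n)+F_p'(v_n)w_n+\tfrac12 F_p''(v_n+\theta_n w_n)w_n^2$: the term $I_n=\|F_p(v_n)\|_{L^1L^2}$ tends to $0$ using \eqref{crit-cond} together with the strong convergence of $v_n$ in $L^q$, the linear term is bounded by $\varepsilon_n\|w_n\|_{\mbox{\tiny ST}([0,T])}$ with $\varepsilon_n\to0$, and the quadratic remainder is controlled under the claim \eqref{claim31}, $\limsup_n\|w_n\|_{L^\infty([0,T];H^1)}\le\frac{1-L\sqrt{4\pi}}{2}$, which guarantees that the exponential is evaluated on $v_n+\theta_n w_n$, strictly below the Trudinger--Moser threshold; the bootstrap $\|w_n\|_{\mbox{\tiny ST}}\lesssim I_n+\varepsilon_n\|w_n\|_{\mbox{\tiny ST}}^2$ then closes. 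Your plan neither exhibits this $o(1)$/quadratic structure nor proves an analogue of \eqref{claim31} (you acknowledge the tuning of $\eta$, the threshold $(1+\eta^{-1})\|\nabla w_n\|_{L^\infty L^2}^2<1$ and the partition as an unresolved obstacle), so as written the critical case is not established.
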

\begin{proof}
The proof of Theorem \ref{both cases} is similar to the one of
Theorems 3.3 and 3.5 in \cite{Bahouri}. Denoting by~$w_n= u_n- v_n$,
it is clear that $w_n$ is the solution of the nonlinear wave
equation $$  \square w_n + w_n =
-F_p(u_n) $$ with null Cauchy data.\\
Under energy estimate, we obtain $$\|w_n\|_T \lesssim
\|F_p(u_n)\|_{L^1([0,T],L^2(\R^2))},$$ where~$\|w_n\|^2_T \eqdefa
\sup_{t\in [0,T]} E_c(w_n,t)$. Therefore, it suffices to prove in
the subcritical and critical cases that
 \begin{equation}\label{Fp} \|F_p(u_n)\|_{L^1([0,T],L^2(\R^2))}\longrightarrow
0\quad\mbox{as}\quad n\rightarrow\infty.
\end{equation}
Let us begin by the subcritical case. Our goal is to prove that the
nonlinear term does not affect the behavior of the solutions. By
hypothesis, there exists some nonnegative real $\rho$ such that
$\Limsup_{n\rightarrow\infty}E_p^n=1-\rho$. The main point for the
proof is based on the following lemma, the proof of which is similar
to the proof of Lemma 3.16 in \cite{Bahouri} once we observed
Inequality \eqref{scat}.
\begin{lem}
\label{subc-est} For every $T>0$ and $E^{0}_{p}<1$, there exists a
constant $C(T,E^{0}_{p})$, such that every solution $u$ of the
nonlinear Klein-Gordon  equation \eqref{exp_p} of energy $E_p(u)\leq
E^{0}_{p}$, satisfies \beq \label{subc-stri} \|u\|_{L^4([0,T];
{\cC}^{1/4})}\leq C(T,E^{0}_{p}).
 \eeq
\end{lem}
Now to establish \eqref{Fp}, it suffices to prove that the sequence
$(F_p(u_n))$ is bounded
in~$L^{1+\epsilon}([0,T],L^{2+\epsilon}(\R^2))$ for some nonnegative
~$\epsilon $ and converges
to~$0$ in measure in~$[0,T]\times \R^2$. This can be done exactly as in \cite{Bahouri} using the fact that $|F_p(u_n)|\leq|F_1(u_n)|$.\\

Let us now prove \eqref{Fp} in the critical case. For that purpose,
let $T>0$ and assume that \bq \label{crit-assum}
L:=\limsup_{n\to\infty}\;\|v_n\|_{L^\infty([0,T];{L^{\Phi_p}})}<\frac{1}{\sqrt{4\pi}}\cdot
 \eq
 Applying Taylor's formula, we obtain
$$
F_p(u_n)=F_p(v_n+w_n)=F_p(v_n)+F_p'(v_n)\,w_n+\frac{1}{2}\;F_p''(v_n+\theta_n\,w_n)\,w_n^2,
$$
for some $0\leq \theta_n\leq 1$. Strichartz estimates \eqref{e6}
yields
$$\|w_n\|_{\mbox{\tiny ST}([0,T])}\lesssim I_n+J_n+K_n,$$
where \begin{eqnarray*}
I_n&=&\|F_p(v_n)\|_{L^1([0,T]; L^2(\R^2))},\\
J_n&=&\|F_p'(v_n)\,w_n\|_{L^1([0,T]; L^2(\R^2))},\quad\mbox{and}\\
K_n&=&\|F_p''(v_n+\theta_n\,w_n)\,w_n^2\|_{L^1([0,T]; L^2(\R^2))}.
\end{eqnarray*}
As in \cite{Bahouri}, we have
\begin{eqnarray*}
I_n&\underset{n\rightarrow\infty}\longrightarrow& 0\quad\mbox{and}\\
J_n&\leq& \varepsilon_n \|w_n\|_{ST([0,T])},
\end{eqnarray*}
where $\varepsilon_n\rightarrow 0$. Besides, provided that \bq
\label{claim31} \limsup_{n\to\infty}\,\|w_n\|_{L^\infty([0,T];
H^1)}\leq \frac{1-L\,\sqrt{4\pi}}{2}, \eq we get
$$K_n \leq \varepsilon_n \|w_n\|_{ST([0,T])}^2,\quad \varepsilon_n\rightarrow 0.$$
Since $\|w_n\|_{ST([0,T])}\lesssim I_n+\varepsilon_n
\|w_n\|_{ST([0,T])}^2$, wet obtain by bootstrap argument
$$\|w_n\|_{ST([0,T])}\lesssim\varepsilon_n,$$
which ends the proof of the result.
\end{proof}

\section{Appendix: Proof of Proposition \ref{Mos3}}
 The proof uses in a crucial way the rearrangement of functions (for a
complete presentation and more details, we refer the reader to
\cite{M}).
 By virtue of  density arguments and the fact that for any function $f\in H^1(\R^2)$ and $f^*$ the rearrangement of f, we have
\beqn
\|\nabla f\|_{L^2}&\geq &\|\nabla f^*\|_{L^2},\\
\|f\|_{L^p}&=&\|f^*\|_{L^p},\\
\|f\|_{L^{\phi_p}}&=&\|f^*\|_{L^{\phi_p}}\,, \eeqn one can reduce to
the case of a nonnegative radially symmetric and non-increasing
function  $u$ belonging to  ${\cD}(\R^2)$.  With this choice, let us
introduce
 the function $$w(t)=(4\pi)^{\frac{1}{2}} u(|x|), \quad \mbox{where} \quad|x|={\rm e}^{-\frac{t}{2}}.$$ It is then obvious  that the functions  $ w(t) $ and
 $ w'(t)$ are nonnegative and satisfy
\begin{eqnarray*}
  \int_{\mathbb{R}^2}|\nabla u(x)|^2 \,dx &=& \int_{-\infty}^{+\infty}|{w}'(t)|^2 \,dt, \\ \int_{\mathbb{R}^2}|u(x)|^{2p}\,dx &=& \frac{1}{4^p \, \pi^{p-1} }
  \int_{-\infty}^{+\infty} |w(t)|^{2p}~{\rm
  e}^{-t}\,dt,
   \\
 \dint_{\mathbb{R}^2} \left({\rm e}^{\alpha |u(x)|^2}-\sum_{k=0}^{p-1}\frac{\alpha^k |u(x)|^{2k}}{k!}\right)dx &=& \pi \dint_{-\infty}^{+\infty} \left({\rm e}^{\frac{\alpha}{4\pi}|w(t)|^2}-\sum_{k=0}^{p-1} \frac{\alpha^k |w(t)|^{2k}}{(4\pi)^k k!}\right) {\rm e}^{-t}\,dt.
\end{eqnarray*}  So we are reduced to prove that for any $\beta\in  [0,1[$, there  exists $C_{\beta}\geq 0$ so that \begin{equation*}\label{res}
\dint_{-\infty}^{+\infty} \left({\rm e}^{\beta
|w(t)|^2}-\sum_{k=0}^{p-1} \frac{\beta^k |w(t)|^{2k}}{k!}\right)
{\rm e}^{-t} dt \leq C({\beta,p})
\dint_{-\infty}^{+\infty}|w(t)|^{2p} {\rm
e}^{-t}\,dt,\quad \forall \,\beta\in  [0,1[,
\end{equation*}
when $\displaystyle \int_{-\infty}^{+\infty}|{w}'(t)|^2 dt \leq1.$
For that purpose, let us  set $$T_{0}=\sup \bigg\lbrace{t \in
\mathbb{R},~w(t)\leq 1\bigg\rbrace}.$$ The existence of a real
number $t_0$ such that $w(t_0)=0$ ensures that the set
$\bigg\lbrace{t \in \mathbb{R},~w(t)\leq 1\bigg\rbrace}$ is non
empty. Then
$$T_0\in]-\infty,+\infty].$$
Knowing that $w$ is nonnegative  and increasing function, we deduce
that
$$ w:   ] -\infty,T_0 ] \longrightarrow  [0,1]. $$  Therefore, observing that  $\displaystyle {\rm e}^{s}-\sum_{k=0}^{p-1} \frac{s^k}{k!} \leq c_p \,s^p \,{\rm e}^{s}$ for any nonnegative real  $s $, we obtain
\begin{equation*}
\dint_{-\infty}^{T_0} \left({\rm e}^{\beta
|w(t)|^2}-\sum_{k=0}^{p-1} \frac{\beta^k |w(t)|^{2k}}{k!}\right)
{\rm e}^{-t} dt \leq c_p\,\beta^p \,{\rm e}^{\beta}
\dint_{-\infty}^{T_0} |w(t)|^{2p} {\rm e}^{-t} dt.
\end{equation*}
To estimate the integral on $[T_0,+\infty[$, let us first notice
that in view of the definition of $T_0$, we have for all $t\geq T_0$
\begin{eqnarray*}
  w(t) &=& w(T_0)+\int_{T_0}^t{w}'(\tau)d\tau \\
  &\leq& w(T_0)+(t-T_0)^{\frac{1}{2}}\left(\int_{T_0}^{+\infty}{w'}(\tau)^2 d\tau\right)^{\frac{1}{2}}\\
  &\leq& 1+(t-T_0)^{\frac{1}{2}}.\end{eqnarray*}
Thus, using the fact  that for any $\varepsilon > 0 $ and any $s\geq
0$, we have
$$(1+s^{\frac{1}{2}})^2 \leq (1+ \varepsilon) s + 1+\frac{1}{\varepsilon}=(1+ \varepsilon) s+C_{\varepsilon},$$
we infer that for for any $\varepsilon > 0 $ and all $t\geq T_0$
\begin{equation}\label{estus}|w(t)|^2 \leq  (1+\varepsilon)(t-T_0)+ C_{\varepsilon}. \end{equation}
Now $\beta$ being fixed in  $[0,1 [$, let us choose $\varepsilon>0$
so that $\beta(1+\varepsilon)<1$. Then by virtue of \eqref{estus}
\begin{eqnarray*}
        \int_{T_0}^{+\infty} \Big({\rm e}^{\beta |w(t)|^2}-\sum_{k=0}^{p-1}\frac{\beta^k  |w(t)|^{2k}}{k!}\Big) {\rm e}^{-t}\,dt&\leq & \int_{T_0}^{+\infty}{\rm e}^{\beta |w(t)|^2} {\rm e}^{-t}\,dt \\
        &\leq&\frac{{\rm e}^{\beta  C_{\varepsilon}-T_0}}{1-\beta(1+\varepsilon)}\cdot
      \end{eqnarray*}
     But
$${\rm e}^{-T_0}=\int_{T_0}^{+\infty} {\rm e}^{-t}\,dt \leq \int_{T_0}^{+\infty}  |w(t) |^{2p} \, {\rm e}^{-t}\,dt,$$
which gives rise to
\begin{equation*}
    \int_{T_0}^{+\infty} \Big({\rm e}^{\beta |w(t)|^2}-\sum_{k=0}^{p-1}\frac{\beta^k |w(t)|^{2k}}{k!}\Big){\rm e}^{-t} dt \leq \frac{{\rm e}^{\beta C_{\varepsilon}}}{1-\beta(1+\varepsilon)} \int_{T_0}^{\infty}  |w(t) |^{2p} {\rm e}^{-t}\,dt.\end{equation*}
Choosing $C({\beta,p})= \max \Big(c_p{\rm e}^{\beta} \beta^p
,\displaystyle \frac{{\rm e}^{\beta
C_{\varepsilon}}}{1-\beta(1+\varepsilon)}\Big)$ ends the proof of
the proposition.
%@@@@@@@@@@@@@@@@@@@@@@@@@@@@@@@@@@@@%@@@@@@@@@@@@@@@@@@@@@@@@@@@@@@@@@@@@%@@@@@@@@

%\pagestyle{plain}
%\bibliographystyle{plain}

\end{document}